\journal{Linear Algebra and its Applications}
\newcommand{\dd}{\!\mathrm{d}}
\newcommand{\E}{\mathrm{e}}
\definecolor{orange}{rgb}{1,0.5,0}
\DeclareMathOperator*{\diag}{diag}
\newtheorem{thm}{Theorem}
\newtheorem{lemma}[thm]{Lemma}
\newdefinition{remark}{Remark}
\newdefinition{definition}{Definition}
\newproof{pf}{Proof}
\newtheorem{example}{\textsc{Example}}
\newcommand{\cred}[1]{{\color{red}  #1}}
\begin{document}

\begin{frontmatter}

\title{\cred{Toeplitz Momentary Symbols: definition, results, and limitations in the spectral analysis of Structured Matrices}}

\author[1]{Matthias Bolten}

\author[2]{Sven-Erik Ekstr\"{o}m}

\author[1]{Isabella Furci*}
\cortext[mycorrespondingauthor]{Corresponding author}
\ead{furci@uni-wuppertal.de}

\author[2,3]{Stefano Serra-Capizzano}

\address[1]{{Department of Mathematics and Informatics}, {University of Wuppertal}, {{Wuppertal}, {Germany}}}

\address[2]{{Division of Scientific Computing, Department of Information Technology}, {University of Uppsala}, {{Uppsala}, {Sweden}}}

\address[3]{{Department of Science and high Technology}, {University of Insubria}, {{Como}, {Italy}}}

\begin{abstract}
A powerful tool for analyzing and approximating the singular values and eigenvalues of structured matrices is the theory of Generalized Locally Toeplitz (GLT) sequences. By the GLT theory one can derive a function, called the symbol, which describes the singular value or \cred{the eigenvalue distribution of the sequence, the latter under precise assumptions. However, for small values} of the matrix size of the considered sequence, the approximations may not be \cred{as good as it is desirable, since in the construction of the GLT symbol one disregards small norm and low-rank perturbations. On the other hand, Local Fourier analysis (LFA) can be used to construct polynomial symbols} in a similar manner for discretizations, where the geometric information is present, but the small norm perturbations are retained.

The main focus of this paper is the introduction of the concept of \cred{sequence of ``Toeplitz momentary symbols''}, associated with \cred{a given sequence of truncated} Toeplitz-like matrices. We construct the symbol in the same way as in the GLT theory, but we keep the information of the small norm contributions. The low-rank contributions \cred{are still disregarded, and we give an idea on the reason why this is negligible in certain cases and why it is not in other cases, being aware that in presence of high nonnormality the same low-rank perturbation can produce a dramatic change in the eigenvalue distribution. Moreover, a difference with respect to} the LFA symbols is that \cred{GLT symbols and Toeplitz momentary symbols are more general - just Lebesgue measurable - and} are applicable to a larger class of matrices. We show the applicability of \cred{the approach which leads to higher accuracy in some cases}, when approximating the singular values and eigenvalues of Toeplitz-like matrices using \cred{Toeplitz momentary symbols}, compared with the GLT symbol.
\cred{Finally}, since for many applications and their analysis it is often necessary to consider non-square Toeplitz matrices, we formalize and provide some useful \cred{definitions}, applicable for non-square Toeplitz momentary symbols. 
\end{abstract}

\begin{keyword}
Spectral analysis, matrix theory, GLT \cred{matrix sequences}, Toeplitz momentary symbols, Toeplitz-like matrices \cred{and matrix sequences.}
\MSC[2010] 15A18, 15A69, 34L20, 35P20, 15B05
\end{keyword}

\end{frontmatter}

\nolinenumbers

\section{Introduction}
\label{sec:introduction}

{
In many cases computing the numerical solution of partial
differential equations (PDEs) requires the solution  of structured (sparse) linear systems \cite{ benedusi181, qp, her14}.
Hence, the spectral properties of the related coefficient matrix play a crucial role for designing an efficient and appropriate solver \cite{ADS,sa_SDG, full-Galerkin, MR1990645}. Moreover, the eigenvalues and eigenvectors themselves are of interest in many applications \cite{COTTRELL20065257, hansen}.
}

{
Depending on the linear differential operator and the used method in the discretization process, the associated coefficient matrices can \cred{possess a very nice structure: often the associated matrix sequences belong to the Toeplitz class} or to the more general class of Generalized Locally Toeplitz
(GLT) matrix sequences \cite{ GLT-bookIII, GLT-bookI, GLT-bookII}.
One of the main advantages of belonging to the latter class is that crucial information of the involved matrices can be related to the concept of the symbol, a function which, under certain hypotheses, provides an  asymptotic description of their eigenvalues and singular values.
In the past years the theory of GLT sequences has been largely improved and successfully used for this purpose. However, since the results from the GLT theory \cred{apply only to matrix sequences, it follows that its validity is of asymptotic type. Therefore, for small matrix-sizes $n$,} the approximations may not be as accurate as it is desirable. Indeed, one aspect of the construction of the GLT symbol is that one disregards all parts that are small norm and low-rank perturbations. Consequently, \cred{for moderate size $n$ the spectra of the matrices of interest can significantly differ from those studied by means of the GLT symbol. For instance, in the case of the Schoenmakers-Coffey matrix sequences, the symbol is zero and in fact the eigenvalues cluster at zero, but this is clearly observed only for large sizes of the corresponding concrete matrices and hence this result was not known to people working in the field \cite{SSS}. On the other hand, when employing the GLT approach for solving large linear systems, the results have been very satisfactory, in particular for designing preconditioners for the (preconditioned) Krylov methods and for defining prolongation and restriction operators in multigrid methods, for PDEs and fractional differential equations (FDEs), approximated by local methods such as Finite Differences, Finite Elements, Finite Volumes, Isogeometric Analysis: see \cite{ GLT-bookIII, GLT-ETNA1, GLT-bookI, GLT-bookII} and references therein. The reason of such a success is quite technical and relies on the fact that the spectral approximation has not to be necessarily very accurate: for instance a preconditioning matrix sequence ensuring a clustering of radius $10^{-1}$ is often sufficient for an optimally convergent (preconditioned) Krylov method.}
}

{
Local Fourier analysis (LFA) is another common tool for the analysis of solvers for linear systems arising from the discretization of \cred{PDEs}.  It is predominantly used in the analysis and design of multigrid methods \cite{Oost} and  it contemplates the following two simplifications: we consider only constant coefficient operators and the discrete equation is supposed to be approximated  with an infinite mesh, i.e., the boundary conditions are neglected. Hence, the geometric information is present \cred{and more information is kept in the symbol, since small norm perturbations are retained. However, a strong limitation is that the symbol is of trigonometric polynomial type, while in the GLT approach any Lebesgue measurable function is allowed.}
}

The main aim of the paper is to introduce and exploit the concept of a (singular value and spectral) \cred{``Toeplitz momentary symbols'', associated with a sequence of truncated} Toeplitz-like matrices. Its construction is similar to that of the symbol in the GLT sense, but in practice we keep also the information of the small norm contributions. Even though the low-rank contributions are still disregarded, we give an idea on why this is \cred{negligible, at least in some cases.}

{
In particular, we consider matrix sequences of the form
\begin{equation*}
\{X_n\}_n=\{T_n(f)\}_n+\{N_n\}_n+\{R_n\}_n,
\end{equation*}
where, for every $n$, $T_n(f)$ is a Toeplitz matrix, $N_n$ is a small norm matrix, and $R_n$ is a low-rank matrix. While in the GLT setting an admissible small norm matrix sequence $\{N_n\}_n$ consists \cred{of very general matrices, in our setting we want to consider sequences with a specific structure}. We illustrate the applicability of the momentary symbols in several examples stemming from applications of interest, highlighting its efficacy and higher accuracy, when approximating the singular values and eigenvalues of \cred{truncated} Toeplitz-like matrices, compared with the GLT symbol.
}

{
The structure of the paper is the following. Firstly, in \cred{Subsection} \ref{sec:introduction:background} we fix the notation and introduce the fundamental preliminaries and results as Toeplitz matrices in the multilevel block setting and the concept of (spectral and singular value) asymptotic distributions. \cred{Subsection \ref{sec:introduction:glt} introduces the axioms characterizing the theory of the GLT sequences, while Subsection \ref{sec:introduction:algebras} presents circulant matrices and other common matrix algebras, together with their spectral properties.  Furthermore, in Section \ref{sec:momentary} we define the notion of Toeplitz momentary symbols and we test its applicability in Examples 1-3, with a discussion on its limits and on links with the Local Fourier Analysis (LFA)}. Finally, since for spectral analysis of many problems it is often necessary to consider non-square Toeplitz matrices, in Section \ref{sec:non-square} we formalize and provide some useful definitions, applicable for non-square momentary symbols.
In the conclusive section, we highlight the main findings of the paper and we give an idea of possible extensions and future developments. 
}
\subsection{Background and definitions}
\label{sec:introduction:background}
Let ${f}:G\to\mathbb{C}$ be a function belonging to $L^1(G)$, with $G\subseteq\mathbb R^\ell$, $\ell\ge 1$, measurable set.
We indicate by $\{A_{n}\}_{n}$ the matrix sequence whose elements are the matrices $A_{n}$ of dimension $n \times n$. Let $s,d \in \mathbb{N}$. If $\mathbf{n}=(n_1,n_2,\dots,n_d)$ is a multi-index we indicate by $\{A_{\mathbf{n}}\}_{\mathbf{n}\in\mathbb{N}^d}$, or simply $\{A_{\mathbf{n}}\}_{\mathbf{n}}$, the $d$-level $s\times s$ block matrix sequence whose elements are the matrices $A_\mathbf{n}$ of size $d_\mathbf{n}=d_\mathbf{n}(\mathbf{n},s)=sn_1n_2\cdots n_d$. For simplicity, if not otherwise specified, we report the main background regarding the matrix sequence in the scalar unilevel setting and we will indicate the strategies and references to generalize such results.
\begin{definition}
\label{def:introduction:background:toeplitz}
A {square} Toeplitz matrix $A_n$ of order $n$ is a matrix that has equal {entries} along each diagonal, and is defined by
\begin{linenomath*}
  \begin{align*}
A_n=\left[a_{i-j}\right]_{i,j=1}^{n}=\left[\begin{smallmatrix}
a_0 & a_{-1} & a_{-2} & \cdots & \cdots & a_{1-n}\vphantom{\ddots}\\
a_1 & \ddots & \ddots & \ddots & & \vdots\\
a_2 & \ddots & \ddots & \ddots & \ddots & \vdots\\
\vdots & \ddots & \ddots & \ddots & \ddots & a_{-2}\\
\vdots & & \ddots & \ddots & \ddots & a_{-1}\\
a_{n-1} & \cdots & \cdots & a_2 & a_1 & a_0\vphantom{\ddots}\\
&
\end{smallmatrix}\right].
 \end{align*} 
\end{linenomath*}
A {square} Toeplitz matrix $T_{n}(f) \in \mathbb{C}^{n \times n}$, is associated with a function $f$, called the \textbf{generating function}, belonging to $L^1([-\pi,\pi])$ and periodically extended to the whole real line.
The matrix $T_{n}(f)$ is defined as
\begin{linenomath*}
  \begin{align*}
  T_n(f)=\left[\hat f_{i-j}\right]_{i,j=1}^n,\nonumber
 \end{align*} 
\end{linenomath*}
where
\begin{linenomath*}
  \begin{align}
  \hat{f}_{k}\coloneqq\frac1{2\pi}\int_{-\pi}^{\pi}\!\!f(\theta)\,\E^{-k\mathbf{i} \theta}\dd\theta,\qquad k\in\mathbb Z,\qquad \mathbf{i}^2=-1,\label{eq:introduction:background:fourier}
 \end{align} 
\end{linenomath*}
are the Fourier coefficients of $f$, and
\begin{linenomath*}
  \begin{align}
  f(\theta)=\!\!\sum_{k=-\infty}^{\infty}\!\!\hat{f}_{k}\E^{k\mathbf{i} \theta},\label{eq:introduction:fourierseries}
 \end{align} 
\end{linenomath*}
is the Fourier series of $f$. 
\end{definition}
In the following we can see how to define block Toeplitz matrices $T_n(\mathbf{f})$ starting from matrix-valued function $\mathbf{f}:[-\pi,\pi]\rightarrow \mathbb{C}^{s\times s}$ with $\mathbf{f}\in L^1([-\pi,\pi])$ and, more in general, how define $d$-level block Toeplitz matrices $T_{n}(\mathbf{f})$ starting from  $d$-variate matrix-valued function $\mathbf{f}:[-\pi,\pi]^{d}\rightarrow \mathbb{C}^{s\times s}$
with $\mathbf{f}\in L^1([-\pi,\pi]^d)$. For the block settings we will write the function $\mathbf{f}$ (and corresponding Fourier coefficients) in bold.
In particular we can define the Fourier coefficients of a given function $\mathbf{f}:[-\pi,\pi]^{d}\rightarrow \mathbb{C}^{s\times s}$ as 
\begin{linenomath*}
  \begin{align*}
  \hat{\mathbf{f}}_{\mathbf{k}}\coloneqq
  \frac1{(2\pi)^d}
  \int_{[-\pi,\pi]^d}\mathbf{f}(\boldsymbol{\theta})\E^{-\mathbf{i}\left\langle {\mathbf{k}},\boldsymbol{\theta}\right\rangle}\mathrm{d}\boldsymbol{\theta}\in\mathbb{C}^{s\times s},
  \qquad \mathbf{k}=(k_1,\ldots,k_d)\in\mathbb{Z}^d,\nonumber
 \end{align*} 
\end{linenomath*}
where $\boldsymbol{\theta}=(\theta_1,\ldots,\theta_d)$, $\left\langle \mathbf{k},\boldsymbol{\theta}\right\rangle=\sum_{i=1}^dk_i\theta_i$, and the integrals of matrices are computed elementwise. The associated generating function, from the Fourier coefficients is
\begin{linenomath*}
  \begin{equation}
\label{eq:introduction:matrixvaluedsymbol}
\mathbf{f}(\boldsymbol{\theta})=\sum_{\mathbf{k}}\hat{\mathbf{f}}_{\mathbf{k}}\E^{\mathbf{i}\left\langle {\mathbf{k}},\boldsymbol{\theta}\right\rangle}.
 \end{equation}\end{linenomath*}

\cred{One $\mathbf{n}$th multilevel block Toeplitz matrix associated with $\mathbf{f}$ is the matrix of dimension $d_{\mathbf{n}}$, where $\mathbf{n}=(n_1,\ldots,n_d)$, $d_{\mathbf{n}}=n_1 n_2\cdots n_d s$},  given by
\begin{linenomath*}
  \begin{align*}
T_\mathbf{n}(\mathbf{f})&=
\sum_{\mathbf{e}-\mathbf{n}\le \mathbf{k}\le \mathbf{n}-\mathbf{e}} T_{n_1}(\E^{\mathbf{i}k_1\theta_1})\otimes \cdots \otimes
T_{n_d}(\E^{\mathbf{i}k_d\theta_1})\otimes \hat{\mathbf{f}}_{\mathbf{k}},
\nonumber
 \end{align*} 
\end{linenomath*}
where $\otimes$ denotes the Kronecker product, \cred{$\mathbf{e}$ is the vectors of all ones, and $\mathbf{k}\le \mathbf{q}$ means $k_j\le q_j$ for all $j=1,\ldots,d$}. For a more detailed description and uses of the multi-index notation see \cite{GLT-bookII}.
In the following we introduce the definition of \textit{spectral distribution} in the sense of the eigenvalues and of the singular values for a generic $d$-level matrix sequence $\{A_{\mathbf{n}}\}_{\mathbf{n}}$, and then the notion of GLT algebra.

\begin{definition} \cite{GLT-bookI,GLT-bookII,gsz,TyZ}
\label{def:introduction:background:distribution}
Let $f,{\mathfrak{f}}:G\to\mathbb{C}$ be  measurable functions, defined on a measurable set $G\subset\mathbb{R}^\ell$ with $\ell\ge 1$, $0<\mu_\ell(G)<\infty$.
Let $\mathcal{C}_0(\mathbb{K})$ be the set of continuous functions with compact support over $\mathbb{K}\in \{\mathbb{C}, \mathbb{R}_0^+\}$ and let $\{A_{\mathbf{n}}\}_{\mathbf{n}}$, be a sequence of matrices with eigenvalues $\lambda_j(A_{\mathbf{n}})$, $j=1,\ldots,{d_\mathbf{n}}$ and singular values $\sigma_j(A_{\mathbf{n}})$, $j=1,\ldots,{d_\mathbf{n}}$.
Then,
\begin{itemize}
  \item The matrix sequence  $\{A_{\mathbf{n}}\}_\mathbf{n}$ is \textit{distributed as the pair $(f,G)$ in the sense of the \textbf{singular values}}; we denote this by
    \begin{linenomath*}
  \begin{align*}
      \{A_{\mathbf{n}}\}_{\mathbf{n}}\sim_\sigma(f,G),\nonumber
     \end{align*} 
\end{linenomath*}
    if the following limit relation holds for all $F\in\mathcal{C}_0(\mathbb{R}_0^+)$:
		\begin{linenomath*}
  \begin{align}
		  \lim_{\mathbf{n}\to\infty}\frac{1}{{d_\mathbf{n}}}\sum_{j=1}^{{d_\mathbf{n}}}F(\sigma_j(A_\mathbf{n}))=
		  \frac1{\mu_\ell(G)}\int_G  F({|f(\boldsymbol{\theta})|})\,\dd{\boldsymbol{\theta}}.\label{eq:introduction:background:distribution:sv}
		 \end{align} 
\end{linenomath*}
    The function $f$ is called the \textbf{singular value symbol} which describes the singular value distribution of the matrix sequence $ \{A_{\mathbf{n}}\}_{\mathbf{n}}$.
	\item The matrix sequence $\{A_{\mathbf{n}}\}_{\mathbf{n}}$ is \textit{distributed as the pair $(\mathfrak{f},G)$ in the sense of the \textbf{eigenvalues}}; we denote this by
    \begin{linenomath*}
  \begin{align*}
           \{A_{\mathbf{n}}\}_{\mathbf{n}}\sim_\lambda({\mathfrak{f}},G),\nonumber
     \end{align*} 
\end{linenomath*}
    if the following limit relation holds for all $F\in\mathcal{C}_0(\mathbb{C})$:
    \begin{linenomath*}
  \begin{align}
       \lim_{\mathbf{n}\to\infty}\frac{1}{{d_\mathbf{n}}}\sum_{j=1}^{{d_\mathbf{n}}}F(\lambda_j(A_{\mathbf{n}}))=
      \frac1{\mu_\ell(G)}\int_G \displaystyle  F({\mathfrak{f}(\boldsymbol{\theta})})\,\dd{\boldsymbol{\theta}}.\label{eq:introduction:background:distribution:ev}
     \end{align} 
\end{linenomath*}
    The function $\mathfrak{f}$ is called the \textbf{eigenvalue symbol} which describes the eigenvalue distribution of the matrix sequence $ \{A_{\mathbf{n}}\}_{\mathbf{n}}$.
  \end{itemize}

\end{definition}
\begin{remark}
\label{rem:introduction:background:1}
  If $A_\mathbf{n}$ is Hermitian, then $f=\mathfrak{f}$. 
 For $d=1$, if $f$ (or $\mathfrak{f}$) is smooth enough, an informal interpretation of the limit relation \eqref{eq:introduction:background:distribution:sv} (or \eqref{eq:introduction:background:distribution:ev}) is that when the matrix size of $A_{n}$ is sufficiently large, then the $n$ singular values (or eigenvalues) of $A_{n}$ can, except for possibly $o(n)$ outliers, be approximated by a sampling of $|f(\theta)|$ (or $\mathfrak{f}(\theta)$) on an equispaced grid of the domain~$G$.
 A grid often used to approximate the eigenvalues of a Hermitian matrix $A_n$, $\lambda_j(A_n)\approx f(\theta_{j,n})$, when $f$ is an even function, is
 \begin{linenomath*}
  \begin{align*}
 \theta_{j,n}=\frac{j\pi}{n+1},\quad j=1,\ldots,n.\nonumber
  \end{align*} 
\end{linenomath*}
 The generalization of Definition \ref{def:introduction:background:distribution} and Remark \ref{rem:introduction:background:1} to the block setting can be found in \cite{GLT-bookIII} and in the references therein. 
\end{remark}

\subsection{Theory of Generalized Locally Toeplitz (GLT) sequences}
\label{sec:introduction:glt}

\cred{We list the axioms of the theory of Generalized Locally Toeplitz (GLT) matrix sequences: these axioms represent an equivalent characterization of the original definition of the GLT matrix sequences. While the original definition is quite involved and requires the introduction of several notions (see \cite{GLT-LAA1,GLT-LAA2}), the advantage of the axioms below is that they are operative and emphasize the practical and operational features of the GLT class, see~\cite{, GLT-bookIII, GLT-ETNA1,GLT-bookI, GLT-bookII} for further details. 
We choose to report the axioms in the general multilevel and block setting. Nevertheless, we will specify section by section what type of matrix sequences we are considering.} In this paper we restrict our attention to the constant coefficient case. However, possible generalizations for the variable coefficient setting can be treated and will be the object of future research.

\begin{description}
  \item[GLT1] \cred{Each GLT sequence has a GLT symbol $\mathbf{f}(\boldsymbol{\theta})$ with $\boldsymbol{\theta}\in [-\pi,\pi]^d$, that is 
  $\{A_\mathbf{n}\}_\mathbf{n}\sim_{\textsc{glt}} \mathbf{f}(\boldsymbol{\theta})$. The GLT symbol is also  a singular value symbol,  according to the second item in Definition \ref{def:introduction:background:distribution} with $\ell=d$. 
  If the sequence is Hermitian, then the distribution also holds in the eigenvalue sense.}
  	
  \item[GLT2] The set of GLT sequences form a $*$-algebra, i.e., it is closed under linear combinations, products, inversion (whenever the symbol is singular, at most, in a set of zero Lebesgue measure), and conjugation.
  Hence, \cred{as a particular case, the GLT matrix sequence obtained via algebraic operations  of a finite set of GLT matrix sequences has symbol given  by performing the same algebraic manipulations of the symbols of the considered GLT matrix sequences.}
	
  \item[GLT3] Every Toeplitz sequence $\{T_n(\mathbf{f})\}_n$ generated by a function $\mathbf{f}=\mathbf{f}(\boldsymbol{\theta})$ belonging to $L^1([-\pi,\pi]^d)$ is a GLT sequence and its GLT symbol 	is $\mathbf{f}$. \cred{Each diagonal sampling sequence $\{D_n(\mathbf{a})\}_n$ with $\mathbf{a}$ Riemann-integrable over $[0,1]^d$ is a GLT sequence and its GLT symbol is $\mathbf{a}$.}
	
  \item[GLT4] Every sequence which is distributed as the constant zero in the singular value sense is a GLT sequence with
	symbol~$0$. In particular:
	\begin{itemize}
		\item every sequence in which the rank divided by the size tends to zero, as the matrix size tends to infinity;
		\item every sequence in which the trace-norm (i.e., sum of the singular values) divided by the size tends to zero, as the matrix size tends to infinity.
	\end{itemize}	 
\end{description}

\subsection{Eigenvalues and eigenvectors for common matrix algebras}
\label{sec:introduction:algebras}
We here introduce notation regarding a few common matrix algebras,  to justify the choice of a specific sampling grid in subsequent sections.
We consider particular matrix algebras $ {\tau_{\varepsilon,\varphi}}$ and the circulant algebra.

\cred{For real parameters $\varepsilon,\varphi$,} the $ {\tau_{\varepsilon,\varphi}}$-algebras are special cases, first introduced in~\cite{bozzo}, of the wider class of $\tau$-algebras, see \cite{bozzo} and references therein. {A matrix in the $ {\tau_{\varepsilon,\varphi}}$-algebra is a polynomial of the generator }

\begin{linenomath*}
  \begin{align*}
 {T_{n,\varepsilon,\varphi}}=
\left[\begin{smallmatrix}
\varepsilon &1\\
1&0&1\\
&\ddots&\ddots&\ddots\\
&&1&0&1\\
&&&1&\varphi 
\end{smallmatrix}\right]
.\nonumber
\end{align*}
\end{linenomath*}

{ Here we restrict the analysis to the case where an element in the algebra  $ {\tau_{\varepsilon,\varphi}}$ is a matrix, denoted  {$T_{n,\varepsilon,\varphi}(f)$}, generated by a function $f$ of the form $f(\theta)=\hat{f}_0+2\hat{f}_1\cos\theta$ that is a matrix  of the form}

\begin{linenomath*}
  \begin{align}
 {T_{n,\varepsilon,\varphi}(f)}=
\left[\begin{smallmatrix}
\hat{f}_0+\varepsilon \hat{f}_1&\hat{f}_1\\
\hat{f}_1&\hat{f}_0&\hat{f}_1\\
&\ddots&\ddots&\ddots\\
&&\hat{f}_1&\hat{f}_0&\hat{f}_1\\
&&&\hat{f}_1&\hat{f}_0+\varphi \hat{f}_1
\end{smallmatrix}
\right]
=
T_n(f)+
\left[\begin{smallmatrix}
\varepsilon \hat{f}_1\\
\\
\\
&&&\varphi \hat{f}_1
\end{smallmatrix}
\right]
,\nonumber
 \end{align} 
\end{linenomath*}\label{eq:tau_decomposition}
where {$|\varepsilon|,|\varphi|\le 1$}. {For discussions on the case $|\varepsilon|,|\varphi|> 1$, see \cite{Jozefiak}}.
Note that, for $|\varepsilon|, |\varphi|<1$, \begin{linenomath*}
  \begin{align*}
{ {T_{n,\varepsilon,\varphi}(f)}=\mathbb{Q}_nD_n(f)\mathbb{Q}_n^{\textsc{t}},}
 \end{align*} 
\end{linenomath*}
where $D_n$ is a diagonal matrix and $\mathbb{Q}_n$ is a real-valued unitary matrix ($\mathbb{Q}_n\mathbb{Q}_n^{\textsc{t}}=\mathbb{I}_n$) depending on $(\varepsilon,\varphi)$. 
The entries on the diagonal of $D_n(f)$ are the eigenvalues of  {$T_{n,\varepsilon,\varphi}(f)$}, which are explicitly given by the sampling of $f$ on a grid $\theta_{j,n}^{(\varepsilon,\varphi)}$.
That is,
\begin{linenomath*}
  \begin{align}
\lambda_j( {T_{n,\varepsilon,\varphi}(f)})&=f(\theta^{(\varepsilon,\varphi)}_{j,n}),\quad j=1,\ldots,n,\nonumber\\
D_n(f)&=\diag(f(\theta^{(\varepsilon,\varphi)}_{j,n})),\quad j=1,\ldots,n.\nonumber
 \end{align} 
\end{linenomath*}
The matrix $\mathbb{Q}_n$ {, which depends on $(\varepsilon,\varphi)$, is often referred to as a discrete sine (or cosine) transform (typically denoted by, for example, \texttt{dst-1}, \texttt{dct-1}; see, e.g., \cite[Appendix 1]{CeccheriniSilberstein2008}). We here define it as,}
\begin{linenomath*}
  \begin{align}
(\mathbb{Q}_n)_{i,j}&=\sqrt{2h}\sin(\Theta^{(\varepsilon,\varphi)}_{i,j,n}),\quad i,j=1,\ldots,n,\nonumber
 \end{align} 
\end{linenomath*}
where $\Theta^{(\varepsilon,\varphi)}_{i,j,n}$ is a grid depending on $(\varepsilon,\varphi)$ and $h$ is the denominator of the corresponding grid $\theta^{(\varepsilon,\varphi)}_{j,n}$ (e.g., for $\varepsilon=\varphi=-1,$ $\theta^{(\varepsilon,\varphi)}_{j,n}=j\pi/n$, then, $h=1/n$).

In Table~\ref{tbl:taugrids} we present the proper grids $\theta^{(\varepsilon,\varphi)}_{j,n}$ and $\Theta_{i,j,n}^{(\varepsilon,\varphi)}$ to give the exact eigenvalues and eigenvectors respectively for $\varepsilon, \varphi \in\{-1,0,1\}$. Note that for  {$T_{n,-1,-1}$} matrices the $n$th eigenvector (column $n$ of $\mathbb{Q}_n$) and for  {$T_{n,1,1}$} matrices the first eigenvector (column one of $\mathbb{Q}_n$) have to be normalized by $1/\sqrt{2}$. 

\begin{table}[!ht]
\centering
\caption{Grids for  {$\tau_{\varepsilon,\varphi}$}-algebras,  $\varepsilon,\varphi\in\{-1,0,1\}$; $\theta_{j,n}^{(\varepsilon,\varphi)}$ and $\Theta_{j,n}^{(\varepsilon,\varphi)}$ are the grids used to compute the eigenvalues and eigenvectors, respectively. {The standard naming convention (\texttt{dst-*} and \texttt{dct-*}) in parenthesis; see, e.g., \cite[Appendix 1]{CeccheriniSilberstein2008}.}}
\label{tbl:taugrids}
\begin{tabular}{|r|ccc||c|}
\hline
&&$\theta_{j,n}^{(\varepsilon,\varphi)}$&&$\Theta_{i,j,n}^{(\varepsilon,\varphi)}$\\[0.2em]
\hline
\diaghead{\theadfont MMMMM}{ $\varepsilon$ }{ $\varphi$ }&
\thead{-1}&\thead{0}&\thead{1}&\thead{-1, 0, 1}\\[0.2em]
\hline
\thead{-1}&\makecell{{{\tiny (\texttt{dst-2})}}\\$\frac{j\pi}{n}$}&\makecell{{{\tiny (\texttt{dst-6})}}\\$\frac{j\pi}{n+1/2}$}&\makecell{{{\tiny (\texttt{dst-4})}}\\$\frac{(j-1/2)\pi}{n}$}&$(i-1/2)\theta_{j,n}^{(\varepsilon,\varphi)}$\\[0.2em]
\thead{0}&\makecell{{{\tiny (\texttt{dst-5})}}\\$\frac{j\pi}{n+1/2}$}&\makecell{{{\tiny (\texttt{dst-1})}}\\$\frac{j\pi}{n+1}$}&\makecell{{{\tiny (\texttt{dst-7})}}\\$\frac{(j-1/2)\pi}{n+1/2}$}&$i\theta_{j,n}^{(\varepsilon,\varphi)}$\\[0.2em]
\thead{1}&\makecell{{{\tiny (\texttt{dct-4})}}\\$\frac{(j-1/2)\pi}{n}$}&\makecell{{{\tiny (\texttt{dct-8})}}\\$\frac{(j-1/2)\pi}{n+1/2}$}&\makecell{{{\tiny (\texttt{dct-2})}}\\$\frac{(j-1)\pi}{n}$}&$(i-1/2)\theta_{j,n}^{(\varepsilon,\varphi)}+\frac{\pi}{2}$\\[0.2em]
\hline
\end{tabular}
\end{table}

Since all grids $\theta_{j,n}^{(\varepsilon,\varphi)}$ associated with  {$\tau_{\varepsilon,\varphi}$}-algebras where $\varepsilon,\varphi\in\{-1,0,1\}$ are uniformly spaced grids, we know that
\begin{alignat*}{7}
\theta_{j,n}^{(1,1)}&<\theta_{j,n}^{(0,1)}&&=\theta_{j,n}^{(1,0)}\nonumber\\
&&&<\theta_{j,n}^{(-1,1)}&&=\theta_{j,n}^{(1,-1)}\nonumber\\
&&&&&<\theta_{j,n}^{(0,0)}\nonumber\\
&&&&&<\theta_{j,n}^{(-1,0)}&&=\theta_{j,n}^{(0,-1)}\nonumber\\
&&&&&&&<\theta_{j,n}^{(-1,-1)},\qquad\qquad \forall j=1,\ldots,n.\label{eq:gridcomparison}
\end{alignat*}

Moreover, for a monotone $f$, and using~\cite[Theorem 2.12]{GLT-bookI} it is possible also give bounds for eigenvalues of matrices belonging to  {$\tau_{\varepsilon,\varphi}$}-algebras where $\varepsilon,\varphi\in [-1,1]$ (and are typically not equispaced) using the known eigenvalues for $\varepsilon,\varphi\in \{-1,0,1\}$.

We now describe matrices $C_n(f)$ belonging to the \textbf{circulant algebra}.
Let the Fourier coefficients of a given {function ${f}\in L^1([-\pi,\pi])$ be defined as in \eqref{eq:introduction:background:fourier}.}
 The ${n}$th circulant matrix generated by ${f}$ is the matrix of dimension $n$ given by

{\begin{definition}\label{def:Cir}
Let the Fourier coefficients of a given function ${f}\in L^1([-\pi,\pi])$ be defined as in formula (\ref{eq:introduction:background:fourier}).
Then, we can define the ${n}$th
circulant matrix $C_{n}(f)$ associated with $f$, which is the square matrix of order $n$ given by:
 \begin{linenomath*}
  \begin{equation}
 C_{n}(f)=\!\!\!\!\!\!\sum_{j=-(n-1)}^{n-1}\!\!\!\!\!\!\hat{f}_{j}Z_{n}^{j}=\mathbb{F}_{n}  D_{n}(f) \mathbb{F}_{n}^{\textsc{h}},\label{eq:introduction:background:circulant:schur}
  \end{equation}\end{linenomath*} where $Z_{n}$ is the $n \times n$ matrix defined by
\begin{linenomath*}
  \begin{align}
\left(Z_{n}\right)_{ij}=\begin{cases}
1,&\text{if }\mathrm{mod}(i-j,n)=1,\\
0,&\text{otherwise}.
\end{cases}\nonumber
 \end{align} 
\end{linenomath*}
Moreover,
\begin{linenomath*}
  \begin{equation}\label{eig-circ}
  D_{n}(f)=\diag\left(s_n(f)(\theta_{j,n}^c)\right),\quad j=1,\ldots,n,
 \end{equation}\end{linenomath*}
where
\begin{linenomath*}
  \begin{align}
   \theta_{j,n}^c=\frac{(j-1)2\pi}{n},\quad j=1,\ldots,n,\label{eq:introduction:background:circulant:grid-circ}
  \end{align} 
\end{linenomath*}
and $s_{n}(f)(\theta)$ is the $ n$th Fourier sum of $f$ given by
\begin{linenomath*}
  \begin{equation}\label{fourier-sum}
s_{n}(f)({\theta})= \sum_{k=1-n}^{n-1}  \hat{f}_{k}
\E^{k\mathbf{i}\theta}.
 \end{equation}\end{linenomath*}
The matrix $\mathbb{F}_n$ is the so called Fourier matrix of order $n$, given by
 \begin{linenomath*}
  \begin{align}
 (\mathbb{F}_{n})_{i,j}=\frac{1}{\sqrt{n}} \E^{\mathbf{i}(i-1)\theta_{j,n}^c}, \quad i,j=1,\ldots,n.
  \end{align} 
\end{linenomath*}

\end{definition}
Then, the columns of the Fourier matrix $\mathbb{F}_n$  are the eigenvectors of $C_{n}(f)$.
The proof of the second equality in (\ref{eq:introduction:background:circulant:schur}) can be found in \cite[Theorem 6.4]{GLT-bookI}.
 
\cred{One must be aware that $C_n(f)$ is a good approximation of $T_n(f)$ only when $S_n(f)(\theta)$ converges to $f(\theta)$ in infinity norm, and this is highly nontrivial. In fact the latter is guaranteed only for continuous $2\pi$-periodic functions belonging to the Dini-Lipschitz class, while there exist counterexamples when this condition is violated (see \cite{estatico-serra} and references therein, as the classical book by Zygmund \cite{Zygmund}). In general, an approximation ensuring that the matrix sequence $\{T_n(f)-\tilde C_n(f)\}$ is zero distributed can be obtained for $f\in L^1([-\pi,\pi])$ and for $\tilde C_n(f)$ being the Frobenius optimal approximation of $T_n(f)$ in the circulant algebra (see \cite{Skoro1,Skoro2} and references there reported).
However, when $f$ is smooth, the set of smooth functions being a tiny subset of the Dini-Lipschitz class, the approximation produced by $C_n(f)$ is much more precise than that given by the circulant Frobenius optimal approximation, see \cite{Slinear}.}
 
In addition, if ${f}$ is a trigonometric polynomial of fixed degree less than $n$,   the entries of $D_n(f)$ are the eigenvalues of $C_n(f)$, explicitly given by sampling the generating function $f$ {using} the grid $\theta_{j,n}^c$,
 \begin{linenomath*}
  \begin{align}
  \lambda_j(C_n(f))&=f\left(\theta_{j,n}^c\right),\quad j=1,\ldots,n,\nonumber\\
   D_{n}(f)&=\diag\left(f\left(\theta_{j,n}^c\right)\right),\quad j=1,\ldots,n. \label{eq:introduction:background:circulant:eig-circSEE}
  \end{align} 
\end{linenomath*}

 When $C_n(f)$ is real symmetric, alternatives to the standard Fourier matrix decomposition in \eqref{eq:introduction:background:circulant:schur}, can be constructed using the discrete sine transform, as for the  {$\tau_{\varepsilon,\varphi}$}-algebras. This is due to the fact that the real and imaginary parts of the Fourier matrix are eigenvectors too. 
 Hence, a real-valued $\mathbb{Q}_n$ such that
 \begin{linenomath*}
  \begin{align}
 C_n(f)&=\mathbb{Q}_nD_n(f)\mathbb{Q}_n^\textsc{t},\nonumber
  \end{align} 
\end{linenomath*}
can, for example, be defined as
\begin{linenomath*}
  \begin{align}
(\mathbb{Q}_{n})_{i,j}&=\sqrt{2h}\sin(\Theta_{i,j,n}^{c}),\qquad \Theta_{i,j,n}^{c}=
\begin{cases}
i\theta_{j,n}^c+\frac{\pi}{2},&j=1,\ldots,\left\lfloor\frac{n+2}{2}\right\rfloor,\\
i\theta_{j,n}^c,&j=\left\lfloor\frac{n+2}{2}\right\rfloor+1,\ldots,n,\\
\end{cases}\nonumber
 \end{align} 
\end{linenomath*} 
where $h=1/n$. Note that the elements of column $j=1$ of $\mathbb{Q}_n$ have to be normalized by $1/\sqrt{2}$. For $n$ even also the elements of column $j=n/2+1$ has to be normalized by $1/\sqrt{2}$.}

The generalization for a $d$ variate $s\times s$ matrix-valued $\mathbf{f}$ via a tensor product argument  of the decompositions (\ref{eq:tau_decomposition}) and (\ref{eq:introduction:background:circulant:schur}), can be obtained easily. \cite{ GLT-bookIII, qp}

\section{Toeplitz momentary symbols: definition, results, and limitations}
\label{sec:momentary}

Consider sequences of unilevel matrices $X_n\in\mathbb{C}^{n\times n}$ of the form
\begin{linenomath*}
  \begin{align}
\{X_n\}_n=\{T_n(f)\}_n+\{N_n\}_n+\{R_n\}_n,
\label{eq:sequence}
 \end{align} 
\end{linenomath*}
where, for every n, $T_n(f)$ is a Toeplitz matrix generated by $f$, $N_n$ is a small norm matrix, and $R_n$ is a low-rank matrix, \cred{in the sense that its rank divided by the size tends to zero as the matrix size tends to infinity.}

\cred{For clarity in this section we consider the the sequences only in the unilevel, scalar form (\ref{eq:sequence}). However, the following theory holds also for more general sequences. It can be easily generalized to circulant sequences, where instead of $T_n(f)$ we consider circulant matrices $C_n(f)$, as in Definition \ref{def:Cir}, but with the restriction to $f$ trigonometric polynomial or by considering the Frobenius optimal approximation with no restriction on the symbol (see the discussion in \cite[Remark 0.1]{GLT-LAA2}. Moreover, we can also consider sequences generated by a multivariate and matrix-valued generating function $\mathbf{f}$, $\{T_{\mathbf{n}}(\mathbf{f})\}_\mathbf{n}$.  Also, algebraic combinations (addition, multiplication, and inversion) of different GLT matrix sequences $\{X_n\}_n$ are valid and this is due to the $*$-algebra nature of GLT matrix sequences.}
\cred{Finally, we highlight that future attention will be given to the matrix sequences involving also diagonal sampling matrices $D_n(a)$, $a : [0, 1]^d \rightarrow \mathbb{C}$ (see {\textbf{GLT3}}), that will permit us to treat also variable coefficient matrix sequences: in the current paper we restrict our attention to the GLT matrix sequences generated only by Toeplitz matrix sequences with $L^1$ generating functions and zero-distributed matrix sequences and this means that we are considering a closed $*$-subalgebra of the general GLT class. 

As described in Section \ref{sec:introduction:background} the generating function for a sequence of Toeplitz matrices $\{T_n(f)\}_n$ is $f$.
The matrix sequences $\{N_n\}_n$ and  $\{R_n\}_n$  are small norm and low-rank matrix sequences in the sense described by the items in {\textbf{GLT4}}.

With the proposed notation, we introduce the notion of  \textit{Toeplitz momentary symbols}. 

 \begin{definition}[Toeplitz momentary symbols]
\label{def:momentarysymbols}
Let $\{X_n\}_n$ be a matrix sequence and assume that there exist matrix sequences $\{A_n^{(j)}\}_n$, scalar sequences $c_n^{(j)}$, $j=0,\ldots,t$,
and measurable functions $f_j$ defined over $[-\pi,\pi]$, $t$ nonnegative integer independent  of $n$, such that
\begin{eqnarray} \nonumber
\left\{ \frac{A_n^{(j)}}{ c_n^{(j)}}\right\}_n & = & T_n(f_j), \\  \label{rel:coeff}
c_n^{(0)}=1, & & c_n^{(s)}=o(c_n^{(r)}), \ \ t\ge s>r, \\ 
\{X_n\}_n & = & \{A_n^{(0)}\}_n + \sum_{j=1}^t \{A_n^{(j)}\}_n.
\end{eqnarray}
Then, by a slight abuse of notation,
\begin{equation}\label{eq:T_momentary_1D}
f_n=f_0+ \sum_{j=1}^t c_n^{(j)} f_j
\end{equation}

is defined as the Toeplitz momentary symbol for $X_n$ and $\{f_n\}$ is the sequence of Toeplitz momentary symbols for the matrix sequence $\{X_n\}_n$.
\end{definition}

According to Subsection \ref{sec:introduction:background}, the Toeplitz momentary symbols could be matrix-valued with a number of variables equal to $d$ and domain $[-\pi,\pi]^d$, if the basic matrix-sequences appearing in Definition \ref{def:momentarysymbols} are, up to proper scaling, multilevel Toeplitz matrix sequences with matrix-valued generating functions.
For example in the scalar $d$-variate setting relation (\ref{eq:T_momentary_1D}) takes the form
\[f_{\textbf{n}}= \sum_{\textbf{j}=\textbf{0}}^\textbf{t} c_\textbf{n}^{(\textbf{j})} f_\textbf{j},\]
which is a plain multivariate (possibly block) version of (\ref{eq:T_momentary_1D}).}

\cred{As expected there are links with the notion of Toeplitz generating function and with the GLT theory, as reported in the next result. Its proof is trivial and relies essentially on the structure of the considered matrix sequences and on the assumption in (\ref{rel:coeff}).

\begin{thm}\label{moment-vs-glt-t}
Assume that the matrix sequence $\{X_n\}_n$ satisfies the requirements in Definition \ref{def:momentarysymbols}. Then $\{X_n\}_n$ is a GLT matrix sequence and the generating function $f_0$ of the main term $A_n^{(j)}=T_n(f_0)$ is the GLT symbol of $\{X_n\}_n$, that is, $\{X_n\}_n \sim_{\textsc{glt}}  f_0$.
Furthermore $\lim_{n\to \infty} f_n=f_0$ uniformly on the definition domain.
\end{thm}
}

\cred{Definition \ref{def:momentarysymbols} is quite general and in our examples we require some restrictions. In the following we will focus our attention to the case of three terms, i.e. $t=2$, as it happens in the approximation of second order differential operators. As already mentioned our examples will belong to this more specific framework. 

More in detail, we take into considerations the following three components. }
\begin{enumerate}
\item {$g^{(1)}(n)=1$ for all $n$:} The matrix $g^{(1)}(n)T_n(f_1)=T_n(f_1)$ is the Toeplitz matrix generated by $f$;\label{it:g1}
\item {$g^{(2)}(n)\to0$:} The matrix $g^{(2)}(n)T_n(f_2)=N_n(f_2)$ is a small norm matrix, such that {$\|N_n(f_2)\|\to0$} as $n\to\infty$;\label{it:g2}
\item {$|g^{(0)}(n)|\to\infty:$} The matrix $g^{(0)}(n)T_n(f_0)=L_n(f_0)$ is a diverging matrix. ($L_n$ denoting ``large-norm''); \label{it:g3}
\item if we define  $\{\hat X_n\}_n=\{g^{(1)}(n)T_n(f_1)\}_n+\{g^{(2)}(n)T_n(f_2)\}_n+\{g^{(0)}(n)T_n(f_0)\}_n$, then $\left\{\hat X_n\over g^{(0)}(n)\right\}_n$
is a matrix sequence satisfying Definition \ref{def:momentarysymbols}, while $\{\hat X_n\}_n$ is its non-normalized version (as it is considered in the LFA setting).
\end{enumerate}

In the multivariate case, where $\mathbf{n}=(n_1,\ldots,n_d)$ we denote the function by \cred{$g^{(j)}(\mathbf{n})$, $j=0,1,2$.}

\cred{With the previous notations, given the matrix sequence 
\begin{linenomath*}
  \begin{align*}
  \{X_n\}_n=\{g^{(1)}(n)T_n(f_1)\}_n+\{g^{(2)}(n)T_n(f_2)\}_n+\{g^{(0)}(n)T_n(f_0)\}_n+\{R_n\}_n,
   \end{align*} 
\end{linenomath*}
where $g^{(i)}(n)$ can be of the form described in items \ref{it:g1}-\ref{it:g3}  and $R_n$ is a low-rank matrix, that is $\frac{{\rm rank}(R_n)}{n}\rightarrow 0$, the nonscaled Toeplitz momentary symbol is defined as
\begin{linenomath*}
  \begin{align}
  f_n(\theta)=\sum_{i=0}^{2}g^{(i)}({n})f_i(\theta),\label{eq:def:momentarysymbol}
 \end{align} 
\end{linenomath*}
and of course $f_0(\theta)$ is the GLT symbol of the matrix sequence $\left\{\frac{X_n}{g^{(0)}(n)}\right\}_n$.
}

\cred{We now illustrate specific examples in which the new notion cannot help, at least when the eigenvalues are considered.}

\begin{remark}
\label{rem:wild eigs}
\cred{The remark is composed by two specific examples showing the different stability of eigenvalues and singular values, under a perturbation of minimal rank one and as small as we want in spectral norm.}
\begin{description}
\item[Case 1: eigenvalue distribution and Toeplitz momentary symbols $\quad$]

\cred{ Consider the matrices $T_n(e^{\mathbf{i} \theta})$ and $X_n=T_n(e^{\mathbf{i} \theta})+ \alpha e_1 e_n^T$ with $\alpha\neq 0$. By direct inspection $\{\alpha e_1 e_n^T \}_n\sim_{\sigma,\lambda} 0$ and hence it is a GLT matrix sequence with zero symbol, independently of the parameter $\alpha$. If we look at the GLT momentary symbols, then they coincide with the GLT symbol for both $\{T_n(e^{\mathbf{i} \theta})\}_n$ and $\{X_n\}_n$: however while in the first case, the eigenvalues are all equal to zero and hence $\{\alpha T_n(e^{\mathbf{i} \theta}) \}_n\sim_\lambda 0$, in the second case they distribute asymptotically as the GLT symbol $e^{\mathbf{i} \theta}$ (which is also the GLT momentary symbol for any $n$). This shows that in the nonnormal setting the distribution function (if it exists) can be discontinuous with respect to any reasonable norm of the matrix sequence, since the modulus of the parameter $\alpha$ is allowed to be as small as we want.}
\item[Case 2: singular value distribution and Toeplitz momentary symbols]
\cred{
Take the same example as before. Again $\{\alpha e_1 e_n^T \}_n$ is a GLT matrix sequence with zero symbol, independently of the parameter $\alpha$, and hence we deduce that both $\{T_n(e^{\mathbf{i} \theta})\}_n$ and $\{X_n\}_n$ share the same GLT symbol $e^{\mathbf{i} \theta}$ (which is also the momentary symbol for any $n$). However, from the viewpoint of the singular values no dramatic change is observed and the GLT symbol describes well the singular values of both the matrix sequences. In fact, due to the interlacing results for singular values, from the GLT theory we know that zero-distributed 
matrix sequences do not change the singular value distribution which is continuous and stable with respect to the entries of the matrix sequence.}
\end{description}
\cred{As already mentioned, in this setting, it must be emphasized that the asymptotic eigenvalue distribution is discontinuous with respect to the standard norms or metrics widely considered in the context of matrix sequences, as the approximating class of sequences (a.c.s.) metric.}
\end{remark}

\begin{remark}
\label{rem:LFA}
\cred{
Local Fourier Analysis (LFA) is a common tool for the analysis of solvers for linear systems arising from the discretization of PDEs. However, two simplifications are made: i) Only constant coefficient operators are considered and ii) the discrete equation is considered on an infinite mesh, i.e., the boundary conditions are neglected. For a given grid spacing $\mathbf{h} \in \mathbb{R}^d$ the infinite grid is given by
\[
\Omega_\mathbf{h} := \{ \mathbf{h} \cdot \mathbf{k} : \mathbf{k} \in \mathbb{Z}^d\}.
\]
Discretizing the PDE using, e.g., finite differences yields a stencil representation of the differential operator. Often this representation includes the grid spacing $\mathbf{h}$. As a consequence, in general the symbol $f_{\mathbf{h}}$ tends to infinity for $\mathbf{h} \rightarrow 0$, as in (\ref{eq:def:momentarysymbol}). Usually, the grid spacing depends on the system size, thus from a GLT-viewpoint the matrix is a ``large-norm'' matrix that is not covered by the GLT theory, even if a simple scaling allows to employ again all the GLT tools. Nevertheless, the inclusion of the $\mathbf{h}$ allows for, e.g., the analysis of discretizations of PDEs involving first and second order derivatives.}

\cred{
The discrete operator on the infinite grid can be represented as an infinite matrix. In LFA the approximations to the eigenvalues of this operator are obtained by evaluating the symbol of the operator at equispaced points, thus by the eigenvalues of a circulant matrix with the same symbol. For non-normal matrices this yields a huge deviation from the true eigenvalues, when small matrices are considered. To overcome this limitation, semi-algebraic analysis techniques have been developed~\cite{MR3367826}. An introduction to LFA and its use in multigrid methods can be found in~\cite{MR2108045}.}
\end{remark}

We here illustrate the applicability of the momentary symbols introduced in \cred{Definition}~\ref{def:momentarysymbols}.
\begin{example}\label{exmp:1}
For the second order finite difference discretization of the problem
\begin{linenomath*}
  \begin{align}
  \begin{cases}
u''(x)+u(x)=f(x),&x\in(0,1), \\
u(x)=0,&x=0,\\
u'(x)=0,&x=1,
\end{cases}\label{eq:exmp1:problem}
 \end{align} 
\end{linenomath*}
we have $X_n\mathbf{u}_n=\mathbf{f}_n$, where,
\begin{linenomath*}
  \begin{align}
  X_n=\underbrace{\frac{1}{h^2}\left[
\begin{smallmatrix}
2&-1\\
-1&2&-1\\
&\ddots&\ddots&\ddots\\
&&-1&2&-1\\
&&&-1&2
\end{smallmatrix}\right]}_{L_n(f_0)}
+\underbrace{\mathbb{I}_n}_{T_n(f_1)}
+\frac{1}{h^2}\underbrace{
\left[\begin{smallmatrix}
\\
\\
\phantom{\ddots}\\
\\
&&&&&-1
\end{smallmatrix}\right]}_{R_n}
,\nonumber
 \end{align} 
\end{linenomath*}
where $h=(n+1)^{-1}$.
By notation established above we have,
\begin{linenomath*}
  \begin{align}
X_n&=L_n(f_0)+T_n(f_1)+h^{-2}R_n\nonumber\\
&=g^{(0)}(n)T_n(f_0)+g^{(1)}(n)T_n(f_{{1}})+h^{-2}R_n,\nonumber
 \end{align} 
\end{linenomath*}
where
\begin{linenomath*}
  \begin{align}
g^{(0)}(n)&=h^{-2},
&&f_0(\theta)=2-2\cos\theta,\nonumber\\
g^{(1)}(n)&=1,
&&f_1(\theta)=1.\nonumber
 \end{align} 
\end{linenomath*}
 However, using the definition of Toeplitz momentary symbol, we deduce that $\{X_n\}_n$ has nonscaled Toeplitz momentary symbol given by 
 \begin{linenomath*}
  \begin{align}
f_n(\theta)&=g^{(0)}(n)f_0(\theta)+g^{(1)}(n)f_1(\theta)\nonumber\\
&=h^{-2}(2-2\cos\theta)+1.\nonumber
 \end{align} 
\end{linenomath*}

The standard GLT approch cannot be used for the sequence $\{X_n\}_n$ as it is defined, since the first term diverges. 
If we want to {be able to} construct a GLT symbol, {for instance to analyze a solver for a linear system}, we should normalize the matrix $X_n$ {by multiplication with} $h^2$ obtaining
\begin{linenomath*}
  \begin{align}
  h^2X_n&=\underbrace{
\left[\begin{smallmatrix}
2&-1\\
-1&2&-1\\
&\ddots&\ddots&\ddots\\
&&-1&2&-1\\
&&&-1&2
\end{smallmatrix}\right]}_{T_n(f_0)}
+\underbrace{h^2\mathbb{I}_n}_{N_n(f_1)}
+\underbrace{\left[\begin{smallmatrix}
\\
\\
\phantom{\ddots}\\
\\
&&&&&-1
\end{smallmatrix}\right]}_{R_n}
\nonumber\\
&=\left[\begin{smallmatrix}
2+h^2&-1\\
-1&2+h^2&-1\\
&\ddots&\ddots&\ddots\\
&&-1&2+h^2&-1\\
&&&-1&1+h^2
\end{smallmatrix}\right].\label{eq:exmp1:matrix}
 \end{align} 
\end{linenomath*}
The matrix $h^2X_n$ can be written as
\begin{linenomath*}
  \begin{align}
h^2X_n&=T_n(f_0)+N_n(f_1)+R_n\nonumber\\
&=g^{(0)}(n)T_n(f_0)+g^{(1)}(n)T_n(f_1)+R_n,\nonumber
 \end{align} 
\end{linenomath*}
where
\begin{linenomath*}
  \begin{align}
g^{(0)}(n)&=1,
&&f_0(\theta)=2-2\cos\theta ,\nonumber\\
g^{(1)}(n)&=h^2,
&&f_1(\theta)=1.\nonumber
 \end{align} 
\end{linenomath*}
Since $\{h^2X_n\}_n$ is Hermitian and both the sequences $\{N_n\}_n$ and $\{R_n\}_n$ are zero-distributed (by \textbf{GLT4}), from the GLT theory, we infer that the spectral symbol is given by $\{h^2X_n\}_n\sim_{\sigma,\lambda}f_0$.

Moreover, if we sample the latter eigenvalue symbol $f_0(\theta)$  with the grid $\theta_{j,n}=j\pi/(n+1)$, associated with the  {$\tau_{0,0}$}-algebra defined in Section~\ref{sec:introduction:algebras}, we will obtain an approximation of the eigenvalues of $h^2X_n$, with an error $\mathcal{O}(h)$. Instead, since $h^2X_n$ belongs to the  {$\tau_{0,1}$}-algebra, see Section~\ref{sec:introduction:algebras}, the exact eigenvalues of $h^2X_n$ are given by 
\begin{linenomath*}
  \begin{align}
  \lambda_j(h^2X_n)&=2+h^2-2\cos(\theta_{j,n}^{(0,1)}),\label{eq:exmp1:meigsymbol}\\
  \theta^{(0,1)}_{j,n}&=\frac{\pi(j-1/2)}{n+1/2},\quad j=1,\ldots,n.\nonumber
 \end{align} 
\end{linenomath*}
Sampling $f_0$ with the grid $\theta_{j,n}^{(0,1)}$ leads to an error of $h^2$ for each eigenvalue.

If we now focus on the Toeplitz momentary symbols, we deduce that the sequence $\{h^2X_n\}_n$ has Toeplitz momentary symbols given by $f_n(\theta)$
with  
\begin{linenomath*}
  \begin{align}
f_n(\theta)=2+h^{2}-2\cos\theta,\nonumber
 \end{align} 
\end{linenomath*}
in accordance with Definition \ref{def:momentarysymbols}.
If we sample the latter on the grid ${\theta^{(0,1)}_{j,n}}$, we obtain the exact eigenvalues  since the evaluations $ f_n(\theta_{j,n})$ coincide 
with \eqref{eq:exmp1:meigsymbol}.
Consequently this example highlights that, for finite matrices, the \cred{Toeplitz momentary symbols $f_{n}(\theta)$ describe more accurately the spectrum than the standard spectral symbol $f_0(\theta)$, from the theory of GLT matrix sequences, with $f_0= \lim_{n\to \infty} f_n$ uniformly on the definition domain, in accordance with Theorem \ref{moment-vs-glt-t}.}

Note that if in~\eqref{eq:exmp1:problem} pure Dirichlet boundary conditions, instead of Dirichlet-Neumann, are imposed then the matrix $h^2X_n$ belongs to the  {$\tau_{0,0}$}-algebra, since $R_n=0$. The eigenvalues $\lambda_j(h^2X_n)$ can then be computed by changing $\theta_{j,n}^{(0,1)}$ to $\theta^{(0,0)}_{j,n}$ in~\eqref{eq:exmp1:meigsymbol}; $\theta_{j,n}^{(0,0)}$ is defined in Table~\ref{tbl:taugrids}. \cred{Similarly, if periodic boundary conditions are imposed in~\eqref{eq:exmp1:problem}, the matrix $h^2X_n$ would be circulant and $\theta_{j,n}^c$, defined in~\eqref{eq:introduction:background:circulant:grid-circ}, should be used in~\eqref{eq:exmp1:meigsymbol} to obtain the exact eigenvalues. Hence, the different low-rank matrices $R_n$ in \eqref{eq:exmp1:matrix} from boundary conditions shifts the grid which gives the exact eigenvalues using~\eqref{eq:exmp1:meigsymbol}.

However, as stressed in Remark \ref{rem:wild eigs}, this result is possible since the main terms are Hermitian and hence normal. The nonnormal setting is delicate and the notions of Toeplitz generating function and Toeplitz momentary symbols could lead to wrong conclusions, due to the wild behavior of the eigenvalues.}

\end{example}

\begin{example}
\label{exmp:2}
In this example we study a constructed non-Hermitian matrix sequence where we have four different symbols: the singular and eigenvalue symbols from GLT theory, and the respective momentary symbols.
Consider
\begin{linenomath*}
  \begin{align}
X_n&=\underbrace{
\left[\begin{smallmatrix}
2&\\
1&2&\\
&\ddots&\ddots\\
&&1&2
\end{smallmatrix}\right]}_{T_n(f_1)}+\underbrace{h\mathbb{I}_n}_{N_n(f_2)}
=\left[\begin{smallmatrix}
2+h\\
1&2+h\\
&\ddots&\ddots\\
&&1&2+h
\end{smallmatrix}\right]
,\nonumber
 \end{align} 
\end{linenomath*}

where
$h=1/n$, and
\begin{linenomath*}
  \begin{align}
T_n(f_0)&=g^{(1)}(n)T_n(f_1),\nonumber\\
N_n(f_2)&=g^{(2)}(n)T_n(f_2),\nonumber\\
g^{(1)}(n)&=1,
&&f_1(\theta)=2+\E^{\mathbf{i}\theta},\nonumber\\
g^{(2)}(n)&=h,
&&f_2(\theta)=1.\nonumber
 \end{align} 
\end{linenomath*}
By the theory of GLT sequences, the singular value symbol is $f_1$, that is,
\begin{linenomath*}
  \begin{align}\label{eq:exmp2:gltsymbol}
\{X_n\}_n&\sim_{\sigma}f_1.
 \end{align} 
\end{linenomath*}
 
Using Definition~\ref{def:momentarysymbols}, the Toeplitz momentary symbols are
\begin{linenomath*}
  \begin{align}
f_n(\theta)=2+h+\E^{\mathbf{i}\theta}.\nonumber
 \end{align} 
\end{linenomath*}

Concerning the singular values of $X_n$, they are  $\sigma_j(X_n)=\sqrt{\lambda_j(X_n^\textsc{t}X_n)}$, and can be approximated by sampling $|f(\theta)|$ or $|f_n(\theta)|$ with an appropriate grid.  However, now we look at the \cred{matrix sequence $\{X_n^\textsc{t}X_n\}_n$, and by the GLT theory we infer that
\begin{linenomath*}
  \begin{align*}
\{X_n^\textsc{t}X_n\}_n&\sim_{\sigma,\lambda}\cred{f_1(-\theta)f_1(\theta)}=g(\theta)=5+4\cos\theta,\nonumber\\
\end{align*} 
\end{linenomath*}
while $\{X_n^\textsc{t}X_n\}_n$ has Toeplitz momentary symbols given by $f_n(-\theta)f_n(\theta)=g_n(\theta)=1+(2+h)^2+2(2+h)\cos\theta$.}

We know that for every $n$ the matrix $X_n^\textsc{t}X_n= {T_{n,0,-1/(2+h)}}(g_n)$, since,
\begin{linenomath*}
  \begin{align}
X_n^{\textsc{t}}X_n&=
\left[\begin{smallmatrix}
\hat{g}_{n_0}&\hat{g}_{n_1}\\
\hat{g}_{n_1}&\hat{g}_{n_0}&\hat{g}_{n_1}\\
&\ddots&\ddots&\ddots\\
&&\hat{g}_{n_1}&\hat{g}_{n_0}&\hat{g}_{n_1}\\
&&&\hat{g}_{n_1}&\hat{g}_{n_0}
\end{smallmatrix}\right]
-\frac{1}{2+h}
\left[\begin{smallmatrix}
&\\
&&\\
&&&\\
&&&&\\
&&&&\hat{g}_{n_1}
\end{smallmatrix}\right]
,\nonumber
 \end{align} 
\end{linenomath*}
where $\hat{g}_{n_0}=1+(2+h)^2$ and $\hat{g}_{n_1}=2+h$. 
 As $n$ grows, the matrix $X_n^\textsc{t}X_n$ tends towards the matrix  {$T_{n,0,-1/2}(g)$}. We have no closed form expressions for the grids $\theta_{j,n}^{(0,-1/2)}$ or $\theta_{j,n}^{(0,-1/(2+h))}$. 
\end{example} 

\cred{
\subsubsection{Analysis of the matrix sequence in Example \ref{exmp:2}} 
 
Taking into consideration the discussion in Example \ref{exmp:2},} in the following lemma{,} we provide a bound for part of the spectrum of matrices belonging to {the}  {$\tau_{0,-1/2}$}-algebra.  {The same argument can be done for matrices belonging to the  {$\tau_{0,-1/(2+h)}$}-algebra.}
\begin{lemma}
Let $f(\theta)=\hat{f}_0+2\hat{f}_1\cos\theta$ be a monotonically decreasing trigonometric polynomial. Then, for $j=2,\dots,n-1$,
$$
\lambda_j( {T_{n,0,-1}(f)})\le\lambda_j( {T_{n,0,-1/2}(f)})\le \cred{\lambda_{j+1}}( {T_{n,0,0}(f)}).
$$
\end{lemma}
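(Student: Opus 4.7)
The plan is to prove the bound by a rank-one perturbation analysis. The key observation is that the $\tau_{0,\varphi}$-decomposition from \eqref{eq:tau_decomposition} specialises to $T_{n,0,\varphi}(f)=T_n(f)+\varphi\,\hat{f}_1\, e_n e_n^{\textsc{t}}$, so that any two members of the $\tau_{0,\varphi}$-family differ by a rank-one update supported on the $(n,n)$ entry:
\[
T_{n,0,\varphi_2}(f)-T_{n,0,\varphi_1}(f) \;=\; (\varphi_2-\varphi_1)\,\hat{f}_1\,e_n e_n^{\textsc{t}}.
\]
To fix the sign of $\hat{f}_1$, I would differentiate $f(\theta)=\hat{f}_0+2\hat{f}_1\cos\theta$ to get $f'(\theta)=-2\hat{f}_1\sin\theta$; since $\sin\theta>0$ on $(0,\pi)$, the monotone-decreasing hypothesis forces $\hat{f}_1\ge 0$. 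Consequently both successive increments $T_{n,0,-1/2}(f)-T_{n,0,-1}(f)$ and $T_{n,0,0}(f)-T_{n,0,-1/2}(f)$ equal $\tfrac{1}{2}\hat{f}_1\, e_n e_n^{\textsc{t}}$, a positive semidefinite rank-one matrix.

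I would then invoke Weyl's monotonicity theorem (equivalently, the rank-one Cauchy-type interlacing inequality): for Hermitian $B$ and $A=B+\alpha vv^{\textsc{t}}$ with $\alpha\ge 0$, and eigenvalues ordered decreasingly, one has $\lambda_j(A)\ge\lambda_j(B)$ for every index $j$. Applying this twice — first with $B=T_{n,0,-1}(f)$, $A=T_{n,0,-1/2}(f)$, and then with $B=T_{n,0,-1/2}(f)$, $A=T_{n,0,0}(f)$ — chains the two inequalities of the lemma. Note that this argument in fact delivers the bound for all $j=1,\ldots,n$, so the restriction to $j=2,\ldots,n-1$ in the statement is automatic.

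The main obstacle is conceptual rather than computational: I must justify that the indexing used in the lemma is consistent with the decreasing Weyl ordering across all three matrices. For $T_{n,0,0}(f)$ and $T_{n,0,-1}(f)$ this is transparent, since the grids $\theta_{j,n}^{(0,0)}$ and $\theta_{j,n}^{(0,-1)}$ listed in Table~\ref{tbl:taugrids} are increasing in $j$ and $f$ is decreasing, so the sampled eigenvalues $f(\theta_{j,n}^{(0,\varphi)})$ are naturally in decreasing order. For $T_{n,0,-1/2}(f)$, however, there is no closed-form grid — this is precisely why the perturbation approach above is preferable to a direct grid comparison in the spirit of \eqref{eq:gridcomparison}. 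Once the eigenvalues of all three matrices are indexed in decreasing order, the two applications of Weyl's inequality close the argument.
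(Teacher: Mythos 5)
Your proof is correct, but it takes a genuinely different and, in fact, cleaner route than the paper's. The paper's argument first compares $\lambda_j(T_{n,0,-1}(f))$ with $\lambda_j(T_{n,0,0}(f))$ via the explicit grids in Table~\ref{tbl:taugrids}, then writes $T_{n,0,-1/2}(f)$ as a rank-one perturbation of \emph{both} $T_{n,0,0}(f)$ and $T_{n,0,-1}(f)$ and applies the two-sided Cauchy interlacing theorem twice, finally combining three chains of inequalities (with some shuffling of the index $j$, which is where the restriction to $j=2,\ldots,n-1$ comes from). You instead observe that the three matrices lie on an affine line: $T_{n,0,-1/2}(f)-T_{n,0,-1}(f)=T_{n,0,0}(f)-T_{n,0,-1/2}(f)=\tfrac{1}{2}\hat{f}_1 e_n e_n^{\textsc{t}}$, and you make explicit the key sign fact $\hat{f}_1\ge 0$ (forced by $f$ decreasing via $f'(\theta)=-2\hat{f}_1\sin\theta$), which the paper leaves implicit. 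Then a single one-sided tool — Weyl's monotonicity under a positive-semidefinite perturbation — closes both inequalities at once, for \emph{all} indices $j=1,\ldots,n$ rather than only $j=2,\ldots,n-1$, and avoids entirely the grid comparison and the more delicate two-sided interlacing bookkeeping. Your concluding remark on the consistency of the ordering is the right thing to flag: since $f$ is decreasing and the grids $\theta_{j,n}^{(0,0)}$, $\theta_{j,n}^{(0,-1)}$ are increasing in $j$, the grid-indexed eigenvalues of the two extreme matrices are already in the decreasing Weyl ordering, and for $T_{n,0,-1/2}(f)$ — which has no closed-form grid — the decreasing ordering is the natural one, so the chain is coherent.
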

 \begin{proof}
 Since $f$ is monotonically decreasing, from the relations between the grids in Table \ref{tbl:taugrids}, we have that for $j=1,\dots,n-1,$
\begin{linenomath*}
  \begin{equation}\label{eq:relation_00_0-1}
\lambda_j( {T_{n,0,-1}(f)})\le\lambda_j( {T_{n,0,0}(f)})\le \lambda_{j+1}( {T_{n,0,0}(f)}).
 \end{equation}\end{linenomath*}

We can write the matrix $ {T_{n,0,-1/2}(f)}$ in terms of rank 1 correction of the matrices $ {T_{n,0,0}(f)}$ and $ {T_{n,0,-1}(f)}$ as follows:
 \begin{linenomath*}
  \begin{align}
 {T_{n,0,-1/2}(f)}&= {T_{n,0,0}(f)}+ \left(-\frac{\hat{f}_1}{2}{\rm \mathbf{e}}_n{\rm \mathbf{e}}_n^{\textsc{t}}\right),\nonumber\\
 {T_{n,0,-1/2}(f)}&= {T_{n,0,-1}(f)}+\left(\frac{\hat{f}_1}{2}{\rm \mathbf{e}}_n{\rm \mathbf{e}}_n^{\textsc{t}}\right),\nonumber
  \end{align} 
\end{linenomath*}
where $\mathbf{e}_n=[0,0,\dots,1]^\textsc{t}$.
Hence, from the Interlacing Theorem \cite{interlacing}, for $j=1,\dots,n-1,$
 \begin{linenomath*}
  \begin{align}
\lambda_j( {T_{n,0,-1}(f)})\le \lambda_j( {T_{n,0,-1/2}(f)})\le \lambda_{j+1}( {T_{n,0,-1}(f)}),\nonumber
  \end{align} 
\end{linenomath*}
 and, for $j=2,\dots, n$,
 \begin{linenomath*}
  \begin{align}
\lambda_j( {T_{n,0,0}(f)})\le \lambda_j( {T_{n,0,-1/2}(f)})\le \lambda_{j+1}( {T_{n,0,0}(f)}).\nonumber
  \end{align} 
\end{linenomath*}
 Then, if we combine the latter relations together with formula (\ref{eq:relation_00_0-1}), we have 
 that for $j=2,\dots,n-1$,
\[
\lambda_j( {T_{n,0,-1}(f)})\le \lambda_j( {T_{n,0,-1/2}(f)})\le \lambda_{j+1}( {T_{n,0,0}(f)}).
\] 

 \end{proof}

\begin{figure}[!ht]
\centering
\includegraphics[width=0.75\textwidth]{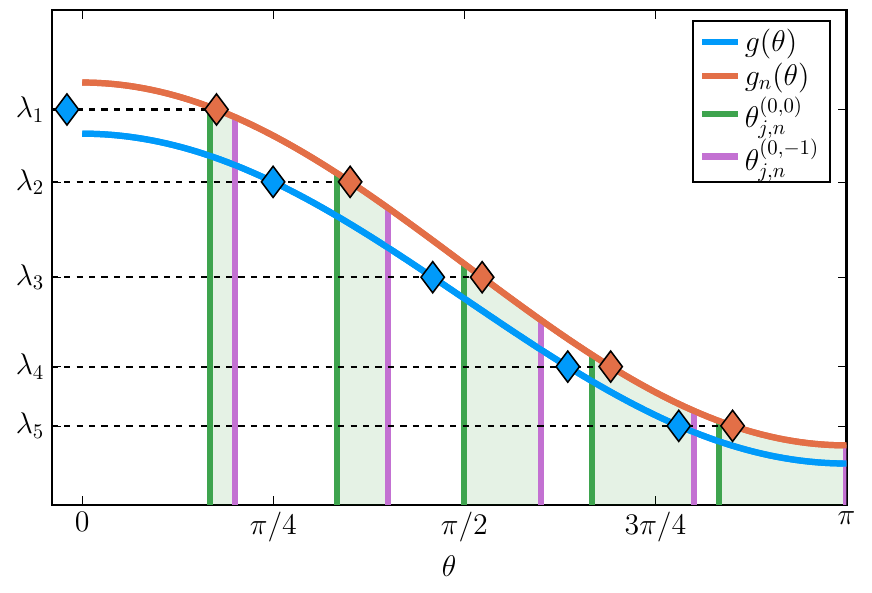}
\caption{Example 2: {The eigenvalues $\lambda_j(X^\textsc{t}_nX_n)$ for $n = 5$. The largest eigenvalue, $\lambda_1$, is an outlier for the standard symbol $g(\theta)$ (blue line), but is in the range of the momentary symbol $g_n(\theta)$ (red line). Light green regions show the intervals where the respective eigenvalues lie, bounded by $\theta_{j,n}^{(0,0)}$ and $\theta_{j,n}^{(0,-1)}$.}}
\label{fig:exmp2:grids}
\end{figure}
To illustrate the relation between the different grids, in Figure~\ref{fig:exmp2:grids} we show the spectrum of $X^\textsc{t}_nX_n$ for $n = 5$. On the {ordinate} the five eigenvalues $\lambda_j$, $j = 1,\dots,5$ are indicated. Diamonds indicate when their values are attained by the GLT symbol and by the Toeplitz momentary symbol. The upper red curve is the graph of the Toeplitz momentary symbols $g_n(\theta)$, the lower blue curve the graph of the GLT symbol $g(\theta)$. Further, vertical bars represent the grids $\theta^{(0,0)}_{j,n}$ (green) and $ {\theta^{(0,-1)}_{j,n}}$ (violet). Clearly, the true eigenvalues of $X^\textsc{t}_nX_n$ are attained by $g_M(\theta)$ in between the corresponding grid points of $\theta^{(0,0)}_{j,n}$ and $ {\theta^{(0,-1)}_{j,n}}$, i.e., in the light green area. This is not true for $g(\theta)$ from the GLT theory, further the GLT symbol cannot attain the value of $\lambda_1$ at all, since it is an outlier. {Therefore, another advantage of using the Toeplitz momentary symbols with respect to the GLT symbol is a better approximations of possible outliers.}

\cred{
Finally, a simple observation on the eigenvalues of the non-Hermitian $X_n$ follows.
By direct inspection we have
\begin{linenomath*}
  \begin{align}
\{X_n\}_n&\sim_{\lambda} 2.\nonumber
 \end{align} 
\end{linenomath*}
Note that $f_1$ in \eqref{eq:exmp2:gltsymbol} is not equal to $2$, and there is no general approach in the theory {of} GLT sequences, or elsewhere, to find the spectral symbol for general non-Hermitian matrix sequences and this because it is just impossible as emphasized in Remark \ref{rem:wild eigs}. }

\begin{example}

In this example we study a bivariate problem, from a space-time discontinuous Galerkin discretization~\cite[Example 6.2]{benedusi181}. Time is considered the first variable and the corresponding discretization parameter is $N$. The second variable is in space, discretized by the parameter $n$. \cred{Hence, as in \cite[Example 6.2]{benedusi181}, we set $\mathbf{n}=(N,n-1)$, and the resulting matrix has the form}
\begin{linenomath*}
  \begin{align}
2Nn^{-1}C_{N,n}^{[1,1,0]}(1)=
\left[\begin{array}{rrrrrr}
 {A^{[1,1,0]}_{2(n-1)}}\\
 {B^{[1,1,0]}_{2(n-1)}}& {A^{[1,1,0]}_{2(n-1)}}\\
&\ddots&\ddots\\
 &&  {B^{[1,1,0]}_{2(n-1)}}& {A^{[1,1,0]}_{2(n-1)}}
\end{array}
\right],\nonumber
 \end{align} 
\end{linenomath*}
where 
\begin{small}
\begin{linenomath*}
  \begin{align}
 {A^{[1,1,0]}_{2(n-1)}}&
=\frac{N}{12n^2}\left(\left[
\begin{array}{rrrrrrr}
9&-9\\
3&5
\end{array}
\right]\otimes T_{n-1}(2+\cos\theta_2)\right)+\left[
\begin{array}{rrrrrrr}
3&0\\
0&1
\end{array}
\right]\otimes T_{n-1}(1-\cos\theta_2),\nonumber\\
 {B^{[1,1,0]}_{2(n-1)}}&
=\frac{N}{12n^2}\left(\left[\begin{array}{rrrrrrr}
0&-12\\
0&4
\end{array}\right]\otimes T_{n-1}(2+\cos\theta_2)\right)
.\nonumber
 \end{align} 
\end{linenomath*}
\end{small}
From the structure of $2Nn^{-1}C_{N,n}^{[1,1,0]}(1)$ it is possible to find a suitable permutation matrix $P\in \mathbb{R}^{2N(n-1)\times2N(n-1)}$ such that $2Nn^{-1}C_{N,n}^{[1,1,0]}(1)$ is transformed into a $2\times 2$ block bi-level Toeplitz matrix $X_{\mathbf{n}}=P\left(2Nn^{-1}C_{N,n}^{[1,1,0]}(1)\right) P^\textsc{t}$ of the form \[X_{\mathbf{n}}=g^{(1)}({\mathbf{n}})T_\mathbf{n}(\mathbf{f}^{(1)})+g^{(2)}({\mathbf{n}})T_\mathbf{n}(\mathbf{f}^{(2)}),\]
where, $g^{(1)}({\mathbf{n}})=1$, $g^{(2)}({\mathbf{n}})=\frac{N}{n^2}$ and, following the notation in (\ref{eq:introduction:matrixvaluedsymbol}),
\begin{linenomath*}
  \begin{equation*}
\mathbf{f}^{(1)}(\theta_1,\theta_2)= \hat{\mathbf{f}}^{(1)}_{(0,0)}+\hat{\mathbf{f}}^{(1)}_{(0,1)} \E^{\mathbf{i}\theta_2}+\hat{\mathbf{f}}^{(1)}_{(0,-1)} \E^{-\mathbf{i}\theta_2},
 \end{equation*}\end{linenomath*}
\begin{linenomath*}
  \begin{equation*}
  \begin{split}
&\mathbf{f}^{(2)}(\theta_1,\theta_2)=\\ &\hat{\mathbf{f}}_{(0,0)}^{(2)}+\hat{\mathbf{f}}^{(2)}_{(0,1)} \E^{\mathbf{i}\theta_2}+ \hat{\mathbf{f}}^{(2)}_{(1,0)} \E^{\mathbf{i}\theta_1}+\hat{\mathbf{f}}_{(0,-1)}^{(2)} \E^{-\mathbf{i}\theta_2}+\hat{\mathbf{f}}^{(2)}_{(1,1)}\E^{\mathbf{i}(\theta_1+\theta_2)}+\hat{\mathbf{f}}^{(2)}_{(1,-1)}\E^{\mathbf{i}(\theta_1-\theta_2)}.
\end{split}
 \end{equation*}\end{linenomath*}
In particular, we have
	\begin{linenomath*}
  \begin{align*}\nonumber
	\hat {\mathbf{f}}^{(1)}_{(0,0)} =\begin{bmatrix}
	3 &0  \\
	\\
		0 & 	1 
	\end{bmatrix},& \nonumber \quad 
		\hat{\mathbf{f}}^{(1)}_{(0,1)}= \cred{\hat{\mathbf{f}}^{(1)}_{(0,-1)}}= \begin{bmatrix}
	-\frac{3}{2} &0\nonumber \\
	\\
		0 & -\frac{1}{2} 
	\end{bmatrix},\nonumber
	 \end{align*} 
\end{linenomath*}
	and
	\begin{linenomath*}
  \begin{align*}\nonumber
	\hat {\mathbf{f}}^{(2)}_{(0,0)} =\begin{bmatrix}
	\frac{3}{ 2} &-\frac{3}{2 }  \\
	\\
		\frac{1}{2 }   & 	\frac{5}{6 } 
	\end{bmatrix},& \nonumber \quad 
		\hat{\mathbf{f}}^{(2)}_{(0,1)}= \hat{\mathbf{f}}^{(2)}_{(0,-1)}= \begin{bmatrix}
	\frac{3}{ 8} &-\frac{3}{ 8} \nonumber \\
	\\
		\frac{1}{ 8}   & 	\frac{5}{24 } 
	\end{bmatrix},\nonumber
	\\ 
	\\
		\hat {\mathbf{f}}^{(2)}_{(1,0)} =\begin{bmatrix}
	0 &-2  \\
	\\
		0   & 	\frac{2}{3 } 
	\end{bmatrix},& \nonumber\quad 
	\hat {\mathbf{f}}^{(2)}_{(1,1)} =\cred{\hat{\mathbf{f}}^{(2)}}_{(1,-1)}=\begin{bmatrix}
	0 & -\frac{1}{2}  \\
	\\
	0&\frac{1}{6 }  
	\end{bmatrix}.
	 \end{align*} 
\end{linenomath*} 
Note that the term $g^{(2)}({\mathbf{n}})$ depends on the behavior  of $\frac{N}{n^2}$. 
 In~\cite[Example 6.2]{benedusi181} the GLT symbol is defined by assuming $N/n^2\to 0$ as $N,n\to\infty$, that is,
 \[X_\mathbf{n}=T_\mathbf{n}(\mathbf{f}^{(1)}),\]
 hence,
\begin{linenomath*}
  \begin{align}
\{X_\mathbf{n}\}_{\mathbf{n}}\sim_{\sigma}\mathbf{f}^{(1)},\nonumber
 \end{align} 
\end{linenomath*}
where we can simplify the expression of $\mathbf{f}^{(1)}$ as
\begin{linenomath*}
  \begin{align}
\mathbf{f}^{(1)}(\theta_1,\theta_2)&=\left[
\begin{array}{rrrrr}
3&0\\
0&1
\end{array}
\right]
(1-\cos\theta_2)
.\nonumber
 \end{align} 
\end{linenomath*}
 An equally valid GLT symbol would be to assume $g^{(2)}(\mathbf{n})=N/n^2=1$. In this setting the sequence is
 \[\{X_\mathbf{n}\}_{\mathbf{n}}=\{T_\mathbf{n}(\mathbf{f}^{(1)})+T_\mathbf{n}(\mathbf{f}^{(2)})\}_{\mathbf{n}}\]
 and the singular value distribution is given by 
\begin{linenomath*}
  \begin{align}
\{X_\mathbf{n}\}_{\mathbf{n}}\sim_{\sigma}\tilde{\mathbf{f}}=\mathbf{f}^{(1)}+\cred{\mathbf{f}^{(2)}},\nonumber
 \end{align} 
\end{linenomath*}
where

\begin{linenomath*}
  \begin{align*}
&\tilde{\mathbf{f}}(\theta_1,\theta_2)=\\
&\left[
\begin{array}{rrrrr}
 \frac{9}{2}  &         -\frac{3}{2}   \\ \\
    \frac{1}{2}   &   \frac{11}{6}     
\end{array}
\right] 
+\left[
\begin{array}{rrrrr}
 \frac{15}{4}    &   \frac{-3}{4}     \\ \\
  \frac{1}{4}       & \frac{17}{12}      \\
\end{array}\right]\cos\theta_2+
\left[
\begin{array}{rrrrr}
 0    &     -1     \\
 \\
  0    &      \frac{1}{3}  \\
\end{array}\right]
\cos\theta_2\E^{\mathbf{i}\theta_1}
+ \left[
\begin{array}{rrrrr}
 0    &     -2     \\ \\
  0    &      \frac{2}{3}  \\
\end{array}\right]\E^{\mathbf{i}\theta_1}
.\nonumber
 \end{align*} 
\end{linenomath*}

Note that for a diverging choice of $g^{(2)}({\mathbf{n}})$, the GLT symbol is not defined, unless we proceed to a proper scaling.

However, the momentary singular value symbol can be constructed independently from the behavior of $g^{(2)}({\mathbf{n}})$. Then, 
$\{X_{\mathbf{n}}\}_{\mathbf{n}}$ has Toeplitz momentary symbols given by $\mathbf{f}_{\mathbf{n}}$ with 
\begin{linenomath*}
  \begin{align}
&\mathbf{f}_{\mathbf{n}}(\theta_1,\theta_2)=\mathbf{f}^{(1)}(\theta_1,\theta_2)
+g^{(2)}({\mathbf{n}})\mathbf{f}^{(2)}(\theta_1,\theta_2).
\nonumber
 \end{align} 
\end{linenomath*}

The same reasoning as in Example~\ref{exmp:2} can be used for choosing a grid for attaining a good approximation of the singular values of $X_{\mathbf{n}}$. Because of the bidiagonal structure of $2Nn^{-1}C_{N,n}^{[1,1,0]}(1)$ we can, after symmetrization in both variables, define the Toeplitz momentary symbol for $X_{\mathbf{n}}$, that is, $\{X_{\mathbf{n}}\}_{\mathbf{n}}$ has Toeplitz momentary symbols defined as
\begin{linenomath*}
  \begin{align}
\boldsymbol{\mathfrak{f}}_{\mathbf{n}}(\theta_1,\theta_2)&=
\mathbf{f}^{(1)}(\theta_1,\theta_2)
+
\frac{N}{12n^2}\left[
\begin{array}{ccccc}
9&\mathbf{i}\sqrt{27}\\
\mathbf{i}\sqrt{27}&5
\end{array}
\right]
(2+\cos\theta_2)
.\label{eq:dg:momentaryeigenvaluesymbol}
 \end{align} 
\end{linenomath*}
The exact eigenvalues are given by sampling the momentary eigenvalue symbol with two grids $\theta_{j,N}^{(1)}$ and $\theta_{j,n-1}^{(2)}$.
For $\theta_{j,N}^{(1)}$ any grid can be used since the symbol \eqref{eq:dg:momentaryeigenvaluesymbol} does not explicitly depend on $\theta_1$. Furthermore, this means that the multiplicity of all distinct eigenvalues of $X_\mathbf{n}$ is $N$. This is not taken into account by the univariate symbol $\mathbf{f}_{[1,1,0]}^{[1,1]}(\theta)$  in~\cite[Example 6.2]{benedusi181},
\cred{
  \begin{equation*}
  \mathbf{f}_{[1,1,0]}^{[1,1]}(\theta)=(2-2\cos\theta)\left[\begin{array}{rr}3/2&0\\0&1/2\end{array}\right].
  \end{equation*}
}
We have the grid $\theta_{j,n-1}^{(2)}=\frac{j\pi}{n}$ for $j=1,\ldots,n-1$. For each sampling of \eqref{eq:dg:momentaryeigenvaluesymbol} a $2\times 2$ eigenvalue problem is to be solved (or an analytic expression can be derived for two separate eigenvalue functions, as it is done in~\cite[Example 6.2]{benedusi181}).
\end{example}

\section{Non-square Toeplitz matrices}
\label{sec:non-square}
\cred{For many applications and their analysis, it is often recommended or even necessary to consider non-square Toeplitz matrices: a canonical example is given by the projector and prolongation operators in multigrid algorithms (see e.g. \cite{FS2}), but we can also find such structures in the non-diagonal blocks of the two by two saddle point coefficient matrices stemming from the numerical approximation of Navier-Stokes equations (see \cite{Doro,Trava} and references therein). Furthermore, the analysis of level by level multigrid matrix sequences via the GLT theory was sketched in \cite[Section 3.7]{GLT-LAA2}}.
In this section we formalize some useful \cred{definitions}, applicable both in the standard GLT setting, and for the momentary symbols.
In \cred{Definition}~\ref{def:nonsquaresymbol} we define symbols that are matrix-valued, but not square. These symbols generate non-square Torplitz matrices, by the standard \cred{definition}. In \cred{Definition}~\ref{def:nonsquaretoeplitz2} we have the standard \cred{definition} of a non-square Toeplitz matrix, generated by scalar or square matrix-valued symbols. Combining these non-square Toeplitz matrices with standard Toeplitz matrices, we can describe a wider class of matrices $X_n$, and the associated matrix sequences $\{X_n\}_n$.
\cred{We start with a simple concrete example, in order to make the notations used in the rest of the section easier to understand.

For $f(\theta)=2-2\cos\theta$, the generating function of the standard scaled Laplacian, we have
\begin{equation*}
T_n(f)=\left[
\begin{array}{rrrrrrrrrr}
2&-1\\
-1&2&-1\\
&\ddots&\ddots&\ddots\\
&&-1&2&-1\\
&&&-1&2
\end{array}
\right].
\end{equation*}
Setting
\begin{equation*}
\mathbf{f}^{[2]}(\theta)=
\left[\begin{array}{rr}2&-1\\-1&2\end{array}\right]+
\left[\begin{array}{rr}0&-1\\0&0\end{array}\right]\E^{\mathbf{i}\theta}+
\left[\begin{array}{rr}0&0\\-1&0\end{array}\right]\E^{-\mathbf{i}\theta},
\end{equation*}
with $N=n/2$ and an even $n$, we infer $T_n(f)=T_N(\mathbf{f}^{[2]})$ where,
\begin{small}
\begin{equation*}
\begin{footnotesize}T_N(\mathbf{f}^{[2]})=\left[
\begin{array}{ccccccccccccc}
\left[\begin{array}{rr}
2&-1\\
-1&2\\
\end{array}\right]&
\left[\begin{array}{rr}
0&\phantom{-}0\\
-1&0\\
\end{array}\right]\\
\left[\begin{array}{rr}
\phantom{-}0&-1\\
0&0\\
\end{array}\right]&
\left[\begin{array}{rr}
2&-1\\
-1&2\\
\end{array}\right]&
\left[\begin{array}{rr}
0&\phantom{-}0\\
-1&0\\
\end{array}\right]\\
\\
&\ddots&\ddots&\ddots\\
\\
&&
\left[\begin{array}{rr}
\phantom{-}0&-1\\
0&0\\
\end{array}\right]&
\left[\begin{array}{rr}
2&-1\\
-1&2\\
\end{array}\right]&
\left[\begin{array}{rr}
0&\phantom{-}0\\
-1&0\\
\end{array}\right]\\
&&&\left[\begin{array}{rr}
\phantom{-}0&-1\\
0&0\\
\end{array}\right]&
\left[\begin{array}{rr}
2&-1\\
-1&2\\
\end{array}\right]
\end{array}
\right],\end{footnotesize}
\end{equation*}
and for an odd $n$, we deduce $T_n(f)=T_N(\mathbf{f}^{[2]})$ where
\begin{equation*}
T_N(\mathbf{f}^{[2]})=\left[
\begin{array}{ccccccccccccc}
\left[\begin{array}{rr}
2&-1\\
-1&2\\
\end{array}\right]&
\left[\begin{array}{rr}
0&\phantom{-}0\\
-1&0\\
\end{array}\right]\\
\left[\begin{array}{rr}
\phantom{-}0&-1\\
0&0\\
\end{array}\right]&
\left[\begin{array}{rr}
2&-1\\
-1&2\\
\end{array}\right]&
\left[\begin{array}{rr}
0&\phantom{-}0\\
-1&0\\
\end{array}\right]\\
\\
&\ddots&\ddots&\ddots\\
\\
&&
\left[\begin{array}{rr}
\phantom{-}0&-1\\
0&0\\
\end{array}\right]&
\left[\begin{array}{rr}
2&-1\\
-1&2\\
\end{array}\right]&
\left[\begin{array}{rr}
0\\
-1\\
\end{array}\right]\\
&&&\left[\begin{array}{rr}
\phantom{-}0&-1
\end{array}\right]&
\left[\begin{array}{rr}
\phantom{-}2
\end{array}\right]
\end{array}
\right].
\end{equation*}
\end{small}
That is, in the case of a matrix-valued symbol $\mathbf{f}$ generating a Toeplitz matrix $T_N(\mathbf{f})$, the parameter $N$ does not have to be an integer (but multiple of $1/s$ if $\mathbf{f}\in \mathbb{C}^{s\times s}$, since $n=sN$ is the integer-valued size of the matrix).

Hence, for the Laplacian above it is true that $\{T_n(f)\}\sim_{\lambda} f$, but it is also true that $\{T_n(f)\}\sim_{\lambda} \mathbf{f}^{[2]}$ and this non uniqueness of the symbol is not surprising and in fact it is a richness of the theory and it was discussed in~detail in \cite{ES-NLAA}. 

According to the previous case, we provide a series of definitions and an example of application.}

\begin{definition}[$f$ and the corresponding $s\times s$ matrix-valued symbol $\mathbf{f}^{[s]}$]
\label{def:matrixvaluedsymbol}
A univariate and scalar-valued generating function $f(\theta)$ has a corresponding $s\times s$ matrix-valued generating function $\mathbf{f}^{[s]}$ defined by
\begin{linenomath*}
  \begin{align}
\mathbf{f}^{[s]}(\theta)=\sum_{\ell=-\infty}^\infty \underbrace{T_s(\E^{-\mathbf{i}\ell s\theta}f(\theta))}_{\hat{\mathbf{f}}_\ell^{[s]}}\E^{\mathbf{i}\ell\theta},\label{eq:blockversionf}
 \end{align} 
\end{linenomath*}
where $\hat{\mathbf{f}}_\ell^{[s]}$ are the corresponding matrix-valued Fourier coefficients. Then,
\begin{linenomath*}
  \begin{align}
T_{ns}(f)=T_n(\mathbf{f}^{[s]}).\nonumber
 \end{align} 
\end{linenomath*}
Moreover, it is possible to extend the idea to a  multivariate $s_1 \times s_1$ matrix-valued generating function $\mathbf{f}^{[s_1]}$. Indeed, \cred{
following a similar procedure in the other level and hence in the other variable, from $\mathbf{f}^{[s_1]}$ we define} the generating function $\mathbf{f}^{[s_1s_2]}$, which is a multivariate and $s_1s_2 \times s_1s_2$ matrix-valued function. Then, we have the equivalent definition of $T_{\mathbf{n}s_2}(\mathbf{f}^{[s_1]})$  as $T_\mathbf{n}(\mathbf{f}^{[s_1s_2]})$.
\end{definition}
\begin{remark}
If the Fourier series of a generating function $f$ exists, as defined in \eqref{eq:introduction:fourierseries}, then the circulant matrix $C_n(f)$ defined in \eqref{eq:introduction:background:circulant:schur} can be rewritten as
 \begin{linenomath*}
  \begin{align}
C_n(f)&=\sum_{\ell=-\infty}^\infty T_n(\E^{\mathbf{i}\ell n\theta}f(\theta))=
\sum_{\ell=-\infty}^\infty T_n\left(\sum_{k=-\infty}^\infty\hat{f}_k\E^{\mathbf{i}(\ell n+k)\theta}\right).\label{eq:nonsquare:circulant}
 \end{align} 
\end{linenomath*}
Note that, from \eqref{eq:blockversionf} in Definition~\ref{def:matrixvaluedsymbol}, we can set $s=n$ and the equality in  \eqref{eq:nonsquare:circulant} becomes
\begin{linenomath*}
  \begin{align}
C_n(f)=\mathbf{f}^{[n]}(0)=\sum_{\ell=-\infty}^\infty \hat{\mathbf{f}}_\ell^{[n]}.\nonumber
 \end{align} 
\end{linenomath*}
 Hence, the circulant matrix $C_n(f)$ can be seen as the sum of all the Fourier coefficients of the matrix-valued version $\mathbf{f}^{[n]}(\theta)$ of $f$.
\end{remark}

\begin{definition}[Non-square matrix-valued function]
\label{def:nonsquaresymbol}
\cred{A non-square ${s\times r}\quad$ Lebesgue integrable matrix-valued function  $\mathbf{f}:[-\pi,\pi]\to\mathbb{C}^{s\times r}$, where $s,r\in\mathbb{N}$, can be defined via its Fourier coefficients $\hat{\mathbf{f}}_{{k}} \in \mathbb{C}^{s\times r}$, as follows:
\begin{linenomath*}
  \begin{align*}
\mathbf{f}({\theta})=\sum_{{k}=-\infty}^{\infty}\hat{\mathbf{f}}_{{k}}\E^{\mathbf{i} {k}{\theta}}, \quad \hat{\mathbf{f}}_{{k}} \in \mathbb{C}^{s\times r}.
\end{align*}
\end{linenomath*}
Notice that $\mathbf{f}$ Lebesgue integrable, $\mathbf{f}=\left(f_{l,m}\right)_{l=1,\ldots,r}^{m=1,\ldots,s}$, simply means that every scalar function
$f_{l,m}$ is Lebesgue integrable, $l=1,\ldots,r,\ m=1,\ldots,s$.}
 
\end{definition}

\begin{definition}[Non-square Toeplitz matrices]
\label{def:nonsquaretoeplitz}

The matrix $T_\mathbf{n}(\mathbf{f})$, with $\mathbf{n}=(n_1,\ldots,n_d)$ and  $\mathbf{f}:[-\pi,\pi]^d\to\mathbb{C}^{s\times r}$ is a multivariate and non-square matrix-valued generating function, is defined as
\begin{linenomath*}
  \begin{align*}
T_\mathbf{n}(\mathbf{f})&=
\sum_{\mathbf{k}} T_{n_1}(\E^{\mathbf{i}k_1\theta_1})\otimes \cdots \otimes
T_{n_d}(\E^{\mathbf{i}k_d\theta_1})\otimes \hat{\mathbf{f}}_{\mathbf{k}}, \quad \hat{\mathbf{f}}_{\mathbf{k}} \in \mathbb{C}^{s\times r}.
\nonumber
\end{align*} 
\end{linenomath*}

\end{definition}

In the following we want to introduce and exploit the concept of non-square identity matrix $\mathbb{I}_{n\times m}\in \mathbb{R}^{n\times m}$, $n\neq m$, that permits us to write a non-square Toeplitz matrix in terms of a square Toepitz matrix.
\begin{definition}[{Non-square identity matrix}]
\label{def:identity}
For an identity matrix $\mathbb{I}_{n\times m}\in \mathbb{R}^{n\times m}$ the following possibilities are admissible: 
\begin{enumerate}
\item $n=m$: $\mathbb{I}_{n\times m}=\mathbb{I}_n=T_n(1)$;
\item $n>m$: $\mathbb{I}_{n\times m}$ is obtained from $\mathbb{I}_{n}$ removing $(n-m)$ columns from the right;
\item $n<m$: $\mathbb{I}_{n\times m}=\mathbb{I}_{m\times n}^\textsc{t}$.
  \end{enumerate}
\end{definition}
\begin{definition}[Non-square Toeplitz matrix $T_{n\times m}(f)\in\mathbb{C}^{n\times m}$]
\label{def:nonsquaretoeplitz2}
We denote by $T_{n\times m}(f)$, with $n\neq m$,  $n,m\in\mathbb{N}$, a non-square Toeplitz matrix  generated by a univariate and scalar-valued generating function $f$. It is defined as
\begin{enumerate}
\item $n>m$: $T_{n\times m}(f)=T_{n}(f)\mathbb{I}_{n\times m}$;
\item $n<m$: $T_{n\times m}(f)=\mathbb{I}_{n\times m}T_{m}(f)$;
  \end{enumerate}
where $\mathbb{I}_{n\times m}$ is defined in Definition~\ref{def:identity}.
\end{definition}
\begin{definition}[Non-square multilevel block Toeplitz matrix $T_{\mathbf{n}\times \mathbf{m}}(\mathbf{f})$]
\label{def:nonsquaretoeplitz3}
A multilevel non-square Toeplitz matrix, denoted by $T_{\mathbf{n}\times \mathbf{m}}(\mathbf{f})$, where $\mathbf{n}=(n_1,\ldots,n_d)$ and $\mathbf{m}=(m_1,\ldots,m_d)$, generated by a multivariate and non-square matrix-valued function $\mathbf{f}:[-\pi,\pi]^d\to\mathbb{C}^{s\times r}$, where $s,r\in\mathbb{N}$ is defined as 
  \begin{linenomath*}
  \begin{align}
  T_{\mathbf{n}\times \mathbf{m}}(\mathbf{f})=\sum_{\mathbf{k}} T_{n_1\times m_1}(\E^{\mathbf{i}k_1\theta_1})\otimes \cdots\otimes T_{n_d\times m_d}(\E^{\mathbf{i}k_d\theta_d}) \otimes \hat{\mathbf{f}}_{k},\nonumber
   \end{align} 
\end{linenomath*}
where $\hat{\mathbf{f}}_{k}\in \mathbb{C}^{s\times r}$ are the Fourier coefficients of $\mathbf{f}$.

The size of the matrix $T_{\mathbf{n}\times \mathbf{m}}(\mathbf{f})$ is $d_\mathbf{n}\times d_\mathbf{m}$ which is given by $d_\mathbf{n}=sn_1n_2\cdots n_d$ and $d_\mathbf{m}=rm_1m_2\cdots m_d$.
\end{definition}

\begin{example}
\label{exmp:4}
In this example we show how a classical non-square Toeplitz matrix can be naturally treated with the aforementioned  notions of non-square generating function and related Toeplitz matrix. We consider the prolongation matrix  stemming from the linear interpolation operator used in multigrid methods (MGM) \cite{Oost, FS2}.  That is, for $n$ odd, the matrix  $P_{n\times (n-1)/2}$,  with the following structure
\begin{linenomath*}
  \begin{align}
P_{n\times (n-1)/2}&=
\left[\begin{smallmatrix}
  1\\
  2&\\
  1&  1\\
  0&  2&\\
 &  1&  1\\
 &  0 &  2&\\
  & && \ddots&\\
 &&&&  1&  1&\\
 &&&&  0&  2&\\ 
 &&&&&  1&  1\\
 &&&&&  0&  2\\
 &&&&&&  1
 \end{smallmatrix}\right]
\in\mathbb{R}^{n\times (n-1)/2}.\nonumber
 \end{align} 
\end{linenomath*}
If we consider the following $2\times 1$ matrix-valued generating function,
\begin{linenomath*}
  \begin{align*}
 {\mathbf{p}}(\theta)=\left[\begin{array}{c} 1\\ 2\end{array}\right]+\left[\begin{array}{c} 1\\ 0\end{array}\right]\E^{\mathbf{i}\theta},
  \end{align*} 
\end{linenomath*}
we can write
\begin{linenomath*}
  \begin{align}
T_{(n+1)/2}( {\mathbf{p}})&=
\left[
\begin{smallmatrix}
 1&&&&&&&  \\
2&&&&&&&  \\
 1&1&&&&&&  \\
0&2&&&&&&  \\
 &1& 1&&&&&  \\
 & 0 & 2&&&&&  \\
  & && \ddots&&&&  \\
 &&&&  1&  1&&  \\
 &&&&  0&  2&&  \\ 
 &&&&& 1& 1&  \\
 &&&&&  0&2&  \\
 &&&&&&  1&  1\\
   &  &  &  &  &  &  0&  2
\end{smallmatrix}
\right]=\left[
\begin{array}{ccc|c}
&&&0\\
&P_{n\times (n-1)/2}&&\vdots\\
&&&0\\
&&&1\\
\hline
0&\cdots&0&2
\end{array}
\right]\in \mathbb{R}^{(n+1)\times (n+1)/2}.\nonumber
 \end{align} 
\end{linenomath*}
Then, removing the last row (by multiplication from the left with $\mathbb{I}_{n\times (n+1)}$) and last column (by multiplication from the right with $\mathbb{I}_{(n+1)/2\times (n-1)/2}$), we can express the matrix $P_{n\times (n-1)/2}$ as
\begin{linenomath*}
  \begin{align}
P_{n\times (n-1)/2}=\mathbb{I}_{n\times (n+1)}T_{(n+1)/2}( {\mathbf{p}})\mathbb{I}_{(n+1)/2\times (n-1)/2}.\nonumber
 \end{align} 
\end{linenomath*}
 This implies that $P_{n\times (n-1)/2}$  shares the same momentary singular value symbol 
\begin{linenomath*}
  \begin{align}
 {\mathbf{p}}(\theta)=\left[\begin{array}{c} 1\\ 2\end{array}\right]+\left[\begin{array}{c} 1\\ 0\end{array}\right]\E^{\mathbf{i}\theta},\label{eq:prolongationmomentarysymbol}
 \end{align} 
\end{linenomath*}
with \cred{the matrix $T_{(n+1)/2}( {\mathbf{p}})+R_1$, which differs from $T_{(n+1)/2}({\mathbf{p}})$ just for a rank 1 correction matrix $R_1$, whose expression is given by 
\[
R_1=
\left[
\begin{array}{ccc|c}
&&&0\\
& &&\vdots\\
&&&0\\
&&&1\\
\hline
0&\cdots&0&2
\end{array}
\right] =(e_n+e_{n-1})e_n^T,
\]
$e_j$, $j=1,\ldots,n$, being the vectors of the canonical basis of $\mathbb{C}^n$.}

 {An additional confirmation of this fact can be seen following a more classical construction of the matrix $P_{n\times (n-1)/2}$, which can be derived in analogous way, see \cite{donatelli111}. Indeed, we can obtain $P_{n\times (n-1)/2}$} multiplying the matrix $T_n(g)$, where $g(\theta)=2+2\cos\theta$, with a so-called cutting matrix $Z_{n\times (n-1)/2}$, as follows, 
\begin{linenomath*}
  \begin{align}
P_{n\times (n-1)/2}&=T_n(g)Z_{n\times (n-1)/2},\nonumber
 \end{align} 
\end{linenomath*}
where, defining the generating function $\mathbf{f}_z(\theta)=
\left[\begin{smallmatrix}
0\\
1
\end{smallmatrix}\right]$,
we have
\begin{linenomath*}
  \begin{align}
Z_{n\times (n-1)/2}=\mathbb{I}_{n\times n+1}T_{(n+1)/2}(\mathbf{f}_z)\mathbb{I}_{(n+1)/2\times (n-1)/2}.\nonumber
 \end{align} 
\end{linenomath*}
By Definition~\ref{def:matrixvaluedsymbol}, for $s=2$, the matrix-valued version of $g$ is
\begin{linenomath*}
  \begin{align}
\mathbf{g}^{[2]}(\theta)&=T_2(g)+T_2(\E^{-2\mathbf{i}\theta}g)\E^{\mathbf{i}\theta}+T_2(\E^{2\mathbf{i}\theta}g)\E^{-\mathbf{i}\theta}\nonumber\\
&=\begin{bmatrix}
2& 1\\
1&2
\end{bmatrix}
+
\begin{bmatrix}
0& 1\\
0&0
\end{bmatrix}\E^{\mathbf{i}\theta}
+
\begin{bmatrix}
0& 0\\
1&0
\end{bmatrix}\E^{-\mathbf{i}\theta}.\nonumber
 \end{align} 
\end{linenomath*}
We then have
\begin{linenomath*}
  \begin{align}
P_{n\times (n-1)/2}=\mathbb{I}_{n\times n+1}T_{(n+1)/2}(\mathbf{g}^{[2]}\mathbf{f}_z)\mathbb{I}_{(n+1)/2\times (n-1)/2},\nonumber
 \end{align} 
\end{linenomath*}
where
\begin{linenomath*}
  \begin{align}\label{eq:project_mult}
\mathbf{g}^{[2]}(\theta)\mathbf{f}_z(\theta)&=\left(\begin{bmatrix}
2& 1\\
1&2
\end{bmatrix}
+
\begin{bmatrix}
0& 1\\
0&0
\end{bmatrix}\E^{\mathbf{i}\theta}
+
\begin{bmatrix}
0& 0\\
1&0
\end{bmatrix}\E^{-\mathbf{i}\theta}\right)\begin{bmatrix}
0\\
1
\end{bmatrix}
=\begin{bmatrix}
1\\
2
\end{bmatrix}
+
\begin{bmatrix}
1\\
0
\end{bmatrix}\E^{\mathbf{i}\theta}\\
&=\mathbf{p}_{n}(\theta) \nonumber,
 \end{align} 
\end{linenomath*}
where $\mathbf{p}_{n}(\theta)$ is defined in \eqref{eq:prolongationmomentarysymbol}.
 {Then, the first part of the example } shows that we can treat non-square ($s\neq r$) matrix-valued generating function as any other generating function, as long as we take care to transform all involved generating functions to blocks of correct sizes and scalar-valued generating functions (which are not just a constant) should be treated as matrices of size $1\times 1$ and have to be resized for valid multiplication.  

 {In the following we want to show how non-square sequences can be studied exploiting the concept of non-square momentary symbols.}

Let us consider the matrix  $h^2X_n$ defined by \ref{eq:exmp1:matrix} in the Example \ref{exmp:1} and its associated momentary symbols
\begin{linenomath*}
  \begin{equation*}
f_n(\theta)=2+h^2-2\cos\theta,
 \end{equation*}\end{linenomath*}
where $h=1/(n+1)$.
 {In many applications the  study of the spectrum of a matrix of the form 
\begin{linenomath*}
  \begin{equation*}
Y_{(n-1)/2}=P^\textsc{H}h^2X_nP,
 \end{equation*}\end{linenomath*}
could be of interest, where $P=P_{n\times(n-1)/2}$. }
 {Indeed, the matrix $Y_{(n-1)/2}$ could be seen as the matrix on the coarse level of a multigrid procedure, obtained using as prolongation operator the matrix $P$.}

 {The matrix $Y_{(n-1)/2}$ is symmetric by construction and its} resulting  {eigenvalue} momentary symbol  {can be constructed as}  
\begin{linenomath*}
  \begin{equation*}
\begin{split}
&y_n(\theta)= {\mathbf{p}^\textsc{H}_n}(\theta)\mathbf{f}_n(\theta) {\mathbf{p}_n}(\theta)=\\
&\left(
\left[\begin{smallmatrix}
1&2\\
\end{smallmatrix}\right]+\left[\begin{smallmatrix}
1&0\\
\end{smallmatrix}\right]e^{-\mathbf{i}\theta}
\right)
\left(
\left[\begin{smallmatrix}
2+h^2&-1\\
-1&2+h^2
\end{smallmatrix}\right]+
\left[\begin{smallmatrix}
0&-1\\
0&0
\end{smallmatrix}\right]e^{\mathbf{i}\theta}+
\left[\begin{smallmatrix}
0&0\\
-1&0
\end{smallmatrix}\right]e^{-\mathbf{i}\theta}
\right)
\left(
\left[\begin{smallmatrix}
1\\
2
\end{smallmatrix}\right]+
\left[
\begin{smallmatrix}
1\\
0
\end{smallmatrix}\right]e^{\mathbf{i}\theta}
\right)=\\
&4+6h^2+2(h^2-2)\cos\theta= {4-4\cos\theta+h^2(6+4\cos\theta)},
\end{split}
 \end{equation*}\end{linenomath*}
where $ {\mathbf{p}^\textsc{H}_n}(\theta)$  {is the momentary singular value symbol of $P^\textsc{H}$} and $\mathbf{f}_n(\theta)$ is the $2\times 2$ block version of $f_n(\theta)$. 
 \cred{An additional confirmation of this fact can be seen, by noticing that, by direct computation, we have}
\begin{linenomath*}
  \begin{equation*}
Y_{(n-1)/2}=T_{(n-1)/2}(y_n)+R_{(n-1)/2},
 \end{equation*}\end{linenomath*}
where $R_{(n-1)/2}$ is a matrix with the only non-zero element being a $-1$ in the bottom right corner.

 {In addition, the matrix} $Y_{(n-1)/2}$ belongs to the  {$\tau_{0,1/(2-h^2)}$}-algebra and we can employ the strategy of Example \ref{exmp:2} to choose the appropriate grid for the eigenvalue approximations  {via its momentary eigenvalue symbol}.

 \cred{Furthermore, we mention that the procedure described in the present example generalizes and justifies the approach presented in \cite{Huckle_compact}. Indeed, the author constructs the symbol at the coarse levels by the $2\times 2$ matrix-valued version of the symbol of the problem and projects it by the function $ B(x)\begin{bmatrix}
 1\\
 0
 \end{bmatrix}$, where $B(x)$ is the chosen symbol of the prolongation operator. The latter is then a particular case of the product of the form (\ref{eq:project_mult}).  Finally, we remark that in a pure GLT context the present reasoning was already considered and described concisely in \cite[Section 3.7]{GLT-LAA2}.}

\end{example}

\section{Conclusions}
\cred{
In this paper we introduced and exploited the concept of the Toeplitz momentary symbols. We showed how the idea behind its construction is similar to that of the symbol stemming from the GLT theory, but in practice it is applicable in order to obtain more precise estimates of eigenvalues and singular values. 

 We illustrated the efficacy of the momentary symbols in  Examples 1-4, including the multilevel block and non-square settings.}
{
Object of further research will be the extension of the proposed tools to more challenging structures coming from applications of interest. In particular, we plan to apply the Toeplitz momentary symbols approach to the iteration matrix sequences stemming from Parallel-in-Time problems.
}

{
Finally, we mention that {in many recent works \cite{matrix_less_block, matrix_less}, under specific hypotheses on the generating function $f$, it is possible to give an accurate description of the eigenvalues of $T_n(f)$ via an asymptotic expansion of the form}
\begin{linenomath*}
  \begin{align}
\lambda_j(T_n(f))=c_0(\theta_{j,n})+hc_1(\theta_{j,n})+h^2c_2(\theta_{j,n})+h^3c_3(\theta_{j,n})+\ldots,\nonumber
 \end{align} 
\end{linenomath*} and the functions $c_k(\theta)$ can be approximated by so-called matrix-less methods. We highlight that in the Hermitian case, we have $c_0=f$ and the subsequent functions $c_1,c_2,\ldots$ can be seen as part of the momentary singular value symbol $f_n$. In the non-Hermitian case, \cred{the situation is much more involved and the approach can be successful only in specific well selected cases, which deserve a careful study}. Then, efficient and fast algorithms  can be designed for computing the singular values and eigenvalues of $T_n(f)$ (plus its possible block, and variable coefficient generalizations) and this will be investigated in the future.
}

\section*{Acknowledgments}

 {We are thankful to Dr. Carlo Garoni for the insightful discussions and suggestions.}
This work was partially supported by ``Gruppo Nazionale per il Calcolo Scientifico'' (GNCS-INdAM).  {The second author was partially funded by the Swedish Research Council through the
International Postdoc Grant (Registration Number 2019-00495).}

\bibliography{biblio}

\end{document}